\def\Z{{\mathbb Z}}
\def\SL{{\rm SL}}
\def\GL{{\rm GL}}
\def\R{{\mathbb R}}
\def\F{{\mathbb F}}
\def\Q{{\mathbb Q}}
\title[Locally soluble Thue equations that are globally insoluble]{A positive proportion of locally soluble quartic Thue equations are globally insoluble}
 \author{Shabnam Akhtari}
 \address{Fenton Hall\\
University of Oregon\\
Eugene, OR 97403-1222 USA}
 \email {akhtari@uoregon.edu}
\subjclass[2010]{11D25, 11D45}
\keywords{Binary quartic forms, and their invariants,  Thue equations,  Hasse principle}
\begin{document}

\let\realverbatim=\verbatim
\let\realendverbatim=\endverbatim
\renewcommand\verbatim{\par\addvspace{6pt plus 2pt minus 1pt}\realverbatim}
\renewcommand\endverbatim{\realendverbatim\addvspace{6pt plus 2pt minus 1pt}}
\makeatletter
\newcommand\verbsize{\@setfontsize\verbsize{10}\@xiipt}
\renewcommand\verbatim@font{\verbsize\normalfont\ttfamily}
\makeatother

 \newtheorem{thm}{Theorem}[section]
\newtheorem{prop}[thm]{Proposition}
\newtheorem{lemma}[thm]{Lemma}
\newtheorem{cor}[thm]{Corollary}
\newtheorem{conj}[thm]{Conjecture} 

\begin{abstract}
For any fixed nonzero integer $h$, we show that a positive proportion of integral binary quartic forms $F$  do locally everywhere represent $h$, but do not  globally represent $h$.
 We order classes of  integral binary quartic forms  by  the two  generators of their ring of  $\GL_{2}(\Z)$-invariants, classically  denoted by $I$ and $J$.
\end{abstract}

\maketitle


\section{Introduction}\label{Intro}

Let  $h\in\Z$ be nonzero. We will prove the existence of many integral  quartic forms that do not represent  $h$. Specifically, we aim to show many quartic {\it Thue equations} 
\begin{equation}
F(x,y)=h
\end{equation}
have no solutions in  integers $x$ and $y$, where $F(x , y)$ is an irreducible  binary quartic form 
with coefficients in the integers.

Let
$$
F(x , y) = a_{0}x^{4} + a_{1}x^{3}y + a_{2}x^{2}y^{2} + a_{3}xy^{3} + a_{4}y^{4} \in \mathbb{Z}[x , y]. 
$$
The discriminant $D$ of $F(x, y)$ is given by
$$
D = D_{F} = a_{0}^{6} (\alpha_{1} - \alpha_{2})^{2}  (\alpha_{1} - \alpha_{3})^{2}   (\alpha_{1} - \alpha_{4})^{2}   (\alpha_{2} - \alpha_{3})^{2}  (\alpha_{2} - \alpha_{4})^{2}  (\alpha_{3} - \alpha_{4})^{2} ,
$$
where $\alpha_{1}$, $\alpha_{2}$, $\alpha_{3}$ and $\alpha_{4}$ are the roots of 
$$
F(x , 1) =  a_{0}x^{4} + a_{1}x^{3} + a_{2}x^{2} + a_{3}x + a_{4} . 
$$
Let
$
A = \bigl( \begin{smallmatrix}
a & b \\
c & d \end{smallmatrix} \bigr)$ be a $2 \times 2$ matrix, with $a, b, c, d \in \Z$. We define the integral  binary quartic  form $F^{A}(x , y)$ by
$$
F^{A}(x , y) : = F(ax + by ,\  cx + dy).
$$
It follows that
\begin{equation}\label{St6}
D_{F^{A}} = (\textrm{det} A)^{12} D_F.
\end{equation}
If $A \in \GL_{2}(\mathbb{Z})$,  then we say that $\pm F^{A}$ is {\it equivalent} to $F$.

 The $\GL_{2}(\Z)$-invariants of a generic  binary quartic form, which will be called \emph{invariants},  form a ring that is generated by two invariants. These two invariants are denoted by  $I$ and $J$ and are  algebraically independent. For 
 $F(x , y) = a_{0}x^{4} + a_{1}x^{3}y + a_{2}x^{2}y^{2} + a_{3}xy^{3} + a_{4}y^{4}$, these invariants are defined as follows:
\begin{equation}\label{defofI}
I = I_{F} = a_{2}^{2} - 3a_{1}a_{3} + 12a_{0}a_{4} 
\end{equation}
and
\begin{equation}\label{defofJ}
J = J_{F} = 2a_{2}^{3} - 9a_{1}a_{2}a_{3} + 27 a_{1}^{2}a_{4} - 72 a_{0}a_{2}a_{4} + 27a_{0}a_{3}^{2}.
\end{equation}
Every invariant is a polynomial in $I$ and $J$. Indeed,  the discriminant $D$, which is an invariant, satisfies
$$
27D = 4I^3 - J^2.
$$

Following \cite{BaShSel}, we define the height $\mathcal{H}(F)$ of an integral binary  quartic form $F(x , y)$ as follows,
\begin{equation}\label{Bash}
\mathcal{H}(F) : = \mathcal{H}(I , J) := \max\left\{\left|I^3\right|, \frac{J^2}{4}\right\},
\end{equation}
where $I = I_F$ and $J = J_F$.

We note that if  $F(x,y)=h$ has no solution, and $G$ is a {\it proper subform} of $F$, i.e., 
\begin{equation}\label{defofsubform}
G(x,y)=F(ax+by,cx+dy)
\end{equation}
for some integer matrix $A=\bigl(\begin{smallmatrix}a&b\\c&d\end{smallmatrix}\bigr)$ with $|\!\det A|>1$, then clearly $G(x,y)=h$ will also have no integer solutions.  We will call a binary form {\it maximal} if it is not a proper subform of another binary form.

Our goal  in this paper is to show  that many (indeed, a positive proportion) of integral binary quartic forms   are not proper subforms, locally represent $h$ at every place, but globally do not represent~$h$. The following is our main result.
\begin{thm}\label{mainquartic}
Let $h$ be any nonzero integer. When maximal integral binary quartic forms $F(x , y) \in \mathbb{Z}[x , y]$ are ordered by their height $\mathcal{H}(I, J)$, a positive proportion of the $\GL_2(\Z)$-classes of these forms $F$ have the following  properties:
\begin{enumerate}[{\rm (i)}]
\item they locally everywhere represent $h$ $($i.e., $F(x , y) = h$ has a solution in~$\R^2$ and in~$\Z_p^2$ for all $p);$ and
\item  they globally do not represent $h$ $($i.e., $F(x , y) = h$ has no solution in~$\mathbb{Z}^2)$. 
\end{enumerate}
\end{thm}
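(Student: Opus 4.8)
The plan is to produce two independent positive-proportion statements and intersect them. On the one hand, I would show that a positive proportion of $\GL_2(\Z)$-classes of maximal integral binary quartic forms (ordered by $\mathcal{H}(I,J)$) locally everywhere represent $h$; on the other hand, I would show that a positive proportion of such classes globally fail to represent $h$. Since both events have positive density and the second event I expect to establish on a set of density arbitrarily close to $1$, the intersection will retain positive density. The natural framework for both halves is the parametrization of binary quartic forms by their invariants $(I,J)$ together with the arithmetic of the pair $(F,h)$: representing $h$ by $F$ is, up to a scaling, the same data as a point on a curve or a 2-Selmer-type element attached to the Jacobian of the genus-one curve $z^2 = F(x,y)$ or to the associated quartic, in the spirit of Bhargava--Shankar.

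For the local solubility half I would argue that local conditions at each place $p$ (and at $\infty$) are defined by congruences on the coefficients, or equivalently by conditions on $(I,J)$ modulo powers of $p$, that are satisfied with positive probability, and that these conditions are suitably independent across places so that a positive proportion of forms are everywhere locally soluble for $F(x,y)=h$. Concretely, I would fix a single prime (or the real place) at which solubility is automatic or cheap, and then show the product of local densities converges to a positive number; a uniformity/tail estimate is needed to guarantee that imposing solubility at all but finitely many primes costs only a bounded factor. This is where I would invoke the counting and equidistribution results that let us order by height $\mathcal{H}(I,J)$ and read off densities of local conditions.

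For the global insolubility half, the key idea is an obstruction to representability that one can detect on a positive proportion of forms. The most promising route is to relate representing $h$ to the existence of a rational (indeed integral) point on an auxiliary genus-one curve, and then to use the fact that a positive proportion of such curves have no rational points at all — for instance because the relevant 2-Selmer or Tate--Shafarevich class is nontrivial, or because a Brauer--Manin / quadratic-reciprocity style obstruction is present for a positive proportion of $(I,J)$. Alternatively, and perhaps more directly, one shows that for a positive proportion of forms the equation $F(x,y)=h$ is locally soluble everywhere yet the associated element in a 2-descent group is nonzero, so no integral solution can exist. The punchline is that local solubility does not force the vanishing of this obstruction, and a positive proportion of locally soluble forms carry a nonvanishing obstruction.

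The main obstacle I anticipate is the global insolubility step, and specifically decoupling it from local solubility: one must exhibit an obstruction that (a) genuinely obstructs integral (not merely rational) representations of the \emph{fixed} value $h$, (b) is compatible with everywhere local solubility, and (c) occurs with positive density when forms are ordered by $\mathcal{H}(I,J)$ rather than by some naive coefficient height. Controlling the density in the $(I,J)$-ordering simultaneously with the integrality and the fixed target $h$ is the delicate part, because the natural descent and Selmer-group heuristics are phrased for rational points and for varying $h$; transferring them to a fixed $h$ and to integral points while keeping a positive proportion — and while staying inside the maximal, non-proper-subform locus — is what I expect to require the most care and the heaviest input from the geometry-of-numbers counting machinery underlying the height $\mathcal{H}(I,J)$.
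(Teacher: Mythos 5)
Your proposal has a genuine gap at its core: the mechanism you propose for global insolubility is not available, and the logical structure of ``intersect two positive-density sets'' is invalid. Two sets of positive density need not intersect; you patch this by asserting that the insoluble set will have density arbitrarily close to $1$, but that assertion is essentially the open conjecture recorded in the paper (that $100\%$ of forms locally representing $h$ fail to represent it globally), and nothing in your sketch proves it. Moreover, the obstruction machinery you invoke (2-Selmer groups, Tate--Shafarevich, Brauer--Manin) is attached to \emph{rational} points: representing $h$ by $F$ is not equivalent to a point on the genus-one curve $z^2=F(x,y)$; it corresponds to an integral point (with $z$ a unit) on the genus-$3$ curve $hz^4=F(x,y)$. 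Since the forms in question are everywhere locally soluble by design, no reciprocity-type local obstruction is doing the work, and no known positive-density statement transfers Selmer-type obstructions to integral solutions of the fixed-target Thue equation $F(x,y)=h$ in the $(I,J)$-ordering. You flag this transfer as ``the delicate part,'' but it is not a matter of delicacy --- it is the entire problem, and your sketch offers no way to carry it out.

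The paper's actual mechanism is entirely different and far more elementary: a pigeonhole between the number of equations and the number of solutions. One fixes three primes $p_1,p_2,p_3 \nmid h$ and restricts to forms $F$ with trivial stabilizer that split completely modulo each $p_i$; then the primitive solutions of $F(x,y)=hp_1p_2p_3$ are in bijection with the primitive solutions of $64=4^3$ pairwise inequivalent descendant equations $G_j(x,y)=h$, each $G_j$ having height $(p_1p_2p_3)^6\mathcal{H}(F)$ (Lemma \ref{corresponds-sol}). A Diophantine-approximation bound (Proposition \ref{maineq4}, valid once $|D_F|$ is large) caps the number of primitive solutions of $F(x,y)=hp_1p_2p_3$ at $52$, so at least $12$ of the $64$ equations $G_j(x,y)=h$ have no integral solutions at all. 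Local solubility of the $G_j$ is then arranged not by an independence or tail estimate but by explicit congruence conditions imposed on $F$: a factorization $L_1L_2^3$ modulo $16$, modulo primes dividing $h$, and modulo small primes; no factorization $cM^2$ modulo large primes, which by the Hasse--Weil bound forces smooth $\mathbb{F}_p$-points on $hz^4=F(x,y)$ that lift by Hensel; and a sign condition at the real place. Finally, Theorems \ref{BaSh-thm1.7} and \ref{BaSh-thm2.11} and Proposition \ref{BSL2.4} of Bhargava--Shankar convert these congruence conditions into a positive density in the $\mathcal{H}(I,J)$-ordering. Your proposal would need to be replaced by, not repaired into, an argument of this kind.
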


In other words, we show that a positive proportion of quartic Thue equations $F(x,y)=h$ fail the integral Hasse principle, when classes of 
integral binary quartic forms $F$ are ordered by the height $\mathcal{H}(I , J)$ defined in \eqref{Bash}. We will construct a family of quartic forms that do not represent a given integer $h$  and obtain a
 lower bound $\mu > 0$ for the density of such forms.  The value for $\mu$ is expressed explicitly in \eqref{finaldensity}.  Moreover, our method yields an explicit construction of this positive density of forms.
It is conjectured that, for any $n \geq 3$, a density of  $100\%$ of integral binary  forms of degree $n$ that locally represent a fixed integer $h$ do not globally represent $h$. The positive lower bound $\mu$ in \eqref{finaldensity} is  much smaller than  the conjectured  density $1$.

In  joint work with Manjul Bhargava  \cite{AB}, we proved  a  result similar to Theorem \ref{mainquartic}. In \cite{AB} we consider integral binary forms of any given degree ordered by na\"ive height (the maximum of absolute values of their coefficients).  Theorem \ref{mainquartic} is new, as we use a different ordering of integral binary quartic forms, which is more interesting  for at least two reasons;  here integral binary quartic forms are ordered by two quantities $I$ and $J$, as opposed to five coefficients, and  $I$ and $J$, unlike the coefficients,  are $\GL_{2}(\Z)$-invariant.
In \cite{AB},  for any fixed integer $h$, we showed that a positive proportion of binary forms of degree $n \geq 3$ do not represent $h$, when binary $n$-ic forms are ordered by their naive heights. Moreover, for $n =3$, we established the same conclusion when cubic forms are ordered by their absolute  discriminants. The Davenport-Heilbronn Theorem, which states that the number of equivalence classes of irreducible binary cubic forms per discriminant is a constant on average, was an essential part of our argument in \cite{AB} for cubic forms. More importantly we made  crucial use of the asymptotic counts given by the Davenport-Heilbronn Theorem   for the number of equivalent integral cubic forms with bounded absolute discriminant (see the original work in \cite{DH}, and  \cite{AB} for application and further references). Such results are not available for  binary forms of degree larger than $3$. For quartic forms, fortunately we are empowered by beautiful results due to Bhargava and Shankar that  give asymptotic formulas for the number of $\GL_{2}(\Z)$-equivalence classes of irreducible integral binary quartic forms having bounded invariants. These results will be discussed in Section \ref{BaShsec}.

This paper is organized as follows.  In Section \ref{perilim} we  discuss some  upper bounds for the number of primitive solutions of quartic Thue equations. Section \ref{BaShsec} contains important results, all cited from \cite{BaShSel}, about the height $\mathcal{H}(I, J)$.   
In Sections \ref{splitsection} and \ref{localsection} we impose conditions on the splitting behavior of the forms  used  in our construction modulo different primes to make sure we produce a large enough number of forms (which in fact form a subset of  integral quartic forms with positive density) that do not represent $h$, without any local obstruction. 
 In Section \ref{completesection}, we summarize the assumptions made in  Sections \ref{splitsection} and \ref{localsection}, and apply essential results cited in Sections  \ref{perilim} and \ref{BaShsec} to conclude that the quartic forms that we construct  form a subset of integral binary quartic forms with positive density.

\section{Primitive Solutions of Thue Equations}\label{perilim}

Let $F(x , y) \in \Z[x , y]$ and $m \in \Z$.
A pair $(x_{0} , y_{0}) \in \mathbb{Z}^2$ is called a {\it primitive solution} to the Thue equation $F(x , y) = m$ if  $F(x_{0} , y_{0}) = m$ and  
$\gcd(x_{0} , y_{0}) = 1$.
We will use the following result from \cite{AkhQuaterly} to obtain upper bounds for the number of primitive solutions of Thue equations.

\begin{prop}[\cite{AkhQuaterly}, Theorem 1.1]\label{maineq4}
 Let $F(x , y) \in \mathbb{Z}[x , y]$ be an irreducible binary form of degree $4$ and discriminant $D$.  Let $m$ be an integer with
  $$
0 < m  \leq \frac{|D|^{\frac{1}{6} - \epsilon} }  {(3.5)^{2} 4^{ \frac{2}{3 } } },
 $$
 where $ 0< \epsilon < \frac{1}{6}$.
 Then the equation $|F(x , y)| = m$ has at most
  \[ 
  36 + \frac{4}{3 \epsilon}
    \]
   primitive solutions. In addition to the above assumptions, if we assume that the polynomial $F(X , 1)$ has $2 \mathtt{i}$ non-real roots, with $\mathtt{i} \in\{0, 1, 2\}$, then the number of primitive solutions does not exceed
  \[ 
 36 -16\mathtt{i} + \frac{4-\mathtt{i}}{3 \epsilon}.
   \]
  \end{prop}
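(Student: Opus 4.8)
The plan is to combine the Thue--Siegel method, in its effective hypergeometric (Padé approximation) form, with a gap principle, keeping every constant explicit. Write $F(x,y) = a_0 \prod_{i=1}^{4}(x - \alpha_i y)$, so that any solution $(x_0,y_0)$ of $|F(x,y)| = m$ satisfies $m = |a_0|\,|y_0|^4 \prod_i |x_0/y_0 - \alpha_i|$. The first observation is that when $|y_0|$ is large, $x_0/y_0$ must be an unusually good rational approximation to one of the roots $\alpha_i$: combining the product identity with a lower bound for $|x_0/y_0 - \alpha_j|$ at the roots $\alpha_j$ away from the nearest one (controlled by the minimal root separation, hence by $|D|$), one extracts an inequality of the shape $|\alpha_i - x_0/y_0| \ll m\,|D|^{-c}\,|y_0|^{-4}$. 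This is precisely where the hypothesis $m \le |D|^{1/6-\epsilon}/((3.5)^2 4^{2/3})$ is used: it forces the approximation to be good enough to fall under the control of an effective irrationality estimate, and the exponent $1/6$ is the natural scale since $|D| \asymp a_0^6 \prod_{i<j}|\alpha_i-\alpha_j|^2$.

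First I would partition the primitive solutions into those \emph{attached} to each root (a solution is attached to the root nearest $x_0/y_0$) together with a set of \emph{small} solutions whose height lies below an explicit threshold, counted directly. Since a rational ratio $x_0/y_0$ cannot lie close to a non-real root, only the real roots carry large solutions; this is the mechanism behind the improvement. If $F(X,1)$ has $2\mathtt{i}$ non-real roots, the conjugate pairs both remove candidate real approximations and shrink the bounded region that the small solutions can occupy, and bookkeeping these two effects produces the constant $36 - 16\mathtt{i}$ together with the coefficient $4 - \mathtt{i}$ of $\tfrac{1}{3\epsilon}$; the all-complex case $\mathtt{i} = 2$ still leaves a small residual count coming purely from bounded solutions.

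Next I would prove a gap principle for solutions attached to a fixed root: if $(x_1,y_1)$ and $(x_2,y_2)$ both approximate $\alpha_i$ with $1 \le |y_1| \le |y_2|$, then the product identity together with the approximation bound forces $|y_2|$ to exceed a fixed power of $|y_1|$, so the large solutions attached to each root grow at least doubly exponentially in $\log|y|$. Their number is therefore bounded by $O(\log\log Y_{\max})$ over the admissible range, and the interplay between this range and the exponent $1/6 - \epsilon$ is what introduces the explicit $\tfrac{1}{3\epsilon}$ dependence.

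The heart of the argument, and the main obstacle, is the Thue--Siegel step. I would construct a sequence of hypergeometric (Padé-type) approximations to the relevant algebraic numbers built from the ratios $(\alpha_i - \alpha_j)/(\alpha_i - \alpha_k)$, bound the heights of the numerators and denominators and the size of the remainders with fully explicit constants, and thereby show the non-repetition principle: once $m$ is small relative to $|D|^{1/6}$, at most one \emph{very good} approximation to a given root can exist, since two independent such approximations would be incompatible with the explicit measure. Combined with the gap principle this caps the number of large solutions per root. The delicate points are the explicit evaluation of the constants --- producing exactly the $(3.5)^2 4^{2/3}$ in the hypothesis and the sharp dependence on $\epsilon$ --- and verifying that the two independent good approximations required to run the non-repetition argument genuinely cannot coexist in the stated range; tracking these estimates through the hypergeometric construction is the technical core. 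Adding the explicitly bounded count of small solutions to the capped count of large solutions then yields the totals $36 + \tfrac{4}{3\epsilon}$ and its refinement $36 - 16\mathtt{i} + \tfrac{4-\mathtt{i}}{3\epsilon}$.
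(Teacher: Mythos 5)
The paper does not actually prove this proposition: it is imported verbatim from \cite{AkhQuaterly}, Theorem 1.1, so the only benchmark is the method of that reference. At the level of strategy your outline does match that method --- partitioning primitive solutions into small and large ones, attaching each large solution to a nearby root via the product identity $m=|a_0||y_0|^4\prod_i|x_0/y_0-\alpha_i|$, a gap principle for solutions attached to a fixed root, and an explicit Thue--Siegel/Pad\'e irrationality measure to cap the number of very good approximations --- so the architecture is the right one.

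As a proof, however, the proposal has genuine gaps. First, every constant that gives the statement its content --- the threshold $(3.5)^2 4^{2/3}$, the additive constant $36$, the correction $-16\mathtt{i}$, and the coefficient $(4-\mathtt{i})/(3\epsilon)$ --- is asserted to emerge from ``bookkeeping'' or from ``tracking the estimates through the hypergeometric construction'' rather than derived; those computations \emph{are} the theorem, and nothing in the sketch pins them down. Second, your stated mechanism for the refinement is inconsistent with the bound being proved: you claim only the real roots carry large solutions and that conjugate pairs merely delete candidate approximations, which would produce a coefficient $4-2\mathtt{i}$ (the number of real roots), not $4-\mathtt{i}$. The shape $4-\mathtt{i}=(4-2\mathtt{i})+\mathtt{i}$ shows that each complex-conjugate pair must still contribute its own (reduced) family of counted solutions: for a solution attached to a pair one only has the lower bound $|x_0-\alpha y_0|\geq |\mathrm{Im}\,\alpha|\,|y_0|$, which weakens but does not eliminate that count, and your accounting has no slot for it. Third, the non-repetition step --- that two independent very good approximations to one root cannot coexist once $m\leq |D|^{1/6-\epsilon}/((3.5)^2 4^{2/3})$ --- is precisely the delicate part of the Thue--Siegel principle, requiring an explicit height comparison between the two hypothetical approximants; saying they ``genuinely cannot coexist'' names the obstacle without overcoming it. In sum, the proposal is a plausible reconstruction of how \cite{AkhQuaterly} proceeds, but not a proof of the proposition.
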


  If the integral binary forms $F_{1}$ and $F_{2}$ are equivalent, as defined in the introduction, then  there exists $A \in \GL_2(\Z)$ such that
  $$
  F_2(x , y) = F_1^{A}(x , y) \, \, \textrm{or} \, \, F_2(x , y) = -F_1^{A}(x , y).
  $$
  Therefore,
  $D_{F_{1}} = D_{F_{2}}$, and for every fixed integer $h$, the number of primitive solutions to $F_1(x , y) = \pm h$ equals the number of primitive solutions to $F_2(x , y) = \pm h$.
  
The invariants $I_F$ and $J_F$  of an integral quartic form $F$ that are defined in \eqref{defofI} and \eqref{defofJ} have weights $4$ and $6$, respectively. This means 
\begin{equation}\label{Idet}
I_{F^{A}} = (\textrm{det} A)^{4} I_F,
\end{equation}
and 
\begin{equation}\label{Jdet}
J_{F^{A}} = (\textrm{det} A)^{6} J_F.
\end{equation}
Consequently, by definition of the height $\mathcal{H}$ in \eqref{Bash}, we have
\begin{equation}\label{Hdet}
\mathcal{H}(F^{A}) = (\textrm{det} A)^{12} \mathcal{H}(F),
\end{equation}
and
\begin{equation*}
\mathcal{H}(-F^{A}) = (\textrm{det} A)^{12} \mathcal{H}(F).
\end{equation*}

\section{On the Bhargava--Shankar height $\mathcal{H}(I, J)$}\label{BaShsec}

In \cite{BaShSel} Bhargava and Shankar introduce the height $\mathcal{H}(F)$ (see \eqref{Bash} for definition) for any integral binary quartic form $F$.  In this section we present some of the asymptotical  results in \cite{BaShSel}, which will be used in our proofs. Indeed these asymptotic formulations  are the reason that we are able to order quartic forms with respect to their $I$ and $J$ invariants.

One may ask which integer  pairs $(I , J)$ can actually occur as the invariants of an integral binary quartic form.  The following result of Bhargava and Shankar provides a complete answer to this question.
\begin{thm}[\cite{BaShSel}, Theorem 1.7]\label{BaSh-thm1.7}
A pair $(I , J) \in \mathbb{Z} \times \mathbb{Z}$ occurs as the invariants of an integral binary quartic form if and only if it satisfies one of the following congruence conditions:
\begin{eqnarray*}
(a) \, \,  I \equiv 0 \, \, (\textrm{mod}\, \,  3) &\textrm{and}\, & J  \equiv  0\, \,  (\textrm{mod}\, \, 27),\\
(b)\,  \,  I \equiv 1 \, \, (\textrm{mod}\, \,  9) &\textrm{and}\, & J  \equiv  \pm 2\, \,  (\textrm{mod}\, \, 27),\\
(c)\, \,  \,  I \equiv 4 \, \, (\textrm{mod}\, \,  9) &\textrm{and}\, & J  \equiv  \pm 16\, \,  (\textrm{mod}\, \, 27),\\
(d)\,  \,  I \equiv 7 \, \, (\textrm{mod}\, \,  9) &\textrm{and}\, & J  \equiv  \pm 7\, \,  (\textrm{mod}\, \, 27).
\end{eqnarray*}
\end{thm}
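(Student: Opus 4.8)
The plan is to prove the two implications separately, deriving the necessity of the congruences from the invariant formulas together with the syzygy recorded in the introduction, and the sufficiency from an explicit family of forms. For necessity I would first reduce \eqref{defofI} modulo $3$: since $-3a_1a_3$ and $12a_0a_4$ are divisible by $3$, we have $I \equiv a_2^2 \pmod 3$, so $I \equiv 0$ or $1 \pmod 3$ (the residue $2$ never occurs), with $I \equiv 0 \pmod 3$ exactly when $3 \mid a_2$. The finer constraints come from the syzygy $27D = 4I^3 - J^2$: as $D = D_F \in \Z$ we get $J^2 \equiv 4I^3 \pmod{27}$, and since $I^3 \bmod 27$ depends only on $I \bmod 9$ this determines $J \bmod 27$ up to sign. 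When $I \equiv 1 \pmod 3$ one has $I \equiv 1,4,7 \pmod 9$ and hence $4I^3 \equiv 4,13,22 \pmod{27}$; because a congruence $x^2 \equiv c \pmod{27}$ with $c$ a unit mod $3$ admits only the roots $x \equiv \pm x_0$ (the factors $x \pm x_0$ cannot both be divisible by $3$, so one of them absorbs the full $27$), this yields $J \equiv \pm 2, \pm 16, \pm 7 \pmod{27}$, i.e., cases (b), (c), (d). The syzygy does not by itself settle case (a), giving only $9 \mid J$; there I would argue directly that $I \equiv 0 \pmod 3$ forces $a_2 = 3b$, and substituting into \eqref{defofJ} makes every monomial of $J$ divisible by $27$, so $J \equiv 0 \pmod{27}$.

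For sufficiency I would exhibit an explicit form realizing each admissible pair. Consider the family $F(x,y) = x^3y + a_2 x^2y^2 + a_3 xy^3 + a_4 y^4$, for which \eqref{defofI} and \eqref{defofJ} reduce to $I = a_2^2 - 3a_3$ and $J = -a_2^3 + 3a_2 I + 27a_4$. Given $(I,J)$, one is forced to take $a_3 = (a_2^2 - I)/3$ and $a_4 = (J + a_2^3 - 3a_2 I)/27$, and it remains only to choose $a_2 \in \{-1,0,1\}$ making both integral. In case (a) take $a_2 = 0$, so that $a_3 = -I/3$ and $a_4 = J/27$ are integers by the defining congruences. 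In cases (b), (c), (d) take $a_2 = \pm 1$: integrality of $a_3$ holds since $I \equiv 1 \pmod 3$, and integrality of $a_4$ is the condition $J \equiv \pm(3I-1) \pmod{27}$. This last congruence holds for an appropriate sign by the syzygy once more: one checks $(3I-1)^2 \equiv 4I^3 \pmod{27}$ for every $I \equiv 1 \pmod 3$, so $J^2 \equiv (3I-1)^2 \pmod{27}$, and the two-roots-mod-$27$ principle forces $J \equiv \pm(3I-1)$.

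The main obstacle, I expect, is the asymmetry between the two halves of the mod-$27$ analysis: the syzygy is exactly strong enough to pin down cases (b)--(d), but falls one power of $3$ short in case (a), so the divisibility $27 \mid J$ there has to be read off from the polynomial \eqref{defofJ} itself rather than from $27D = 4I^3 - J^2$. Once that lone computation is isolated, both directions become bookkeeping modulo $27$ together with the three-term family above.
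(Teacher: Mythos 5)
Your proposal is correct, but there is an important caveat about the comparison you ask for: the paper contains no proof of this statement at all. Theorem~\ref{BaSh-thm1.7} is quoted verbatim from \cite{BaShSel} (Theorem 1.7 there) and used as a black box, so the only thing to judge is your argument on its own merits, and it holds up. The computational pivots all check out: $I \equiv a_2^2 \pmod{3}$ from \eqref{defofI}; $I^3 \bmod 27$ depends only on $I \bmod 9$; $4I^3 \equiv 4, 13, 22 \pmod{27}$ for $I \equiv 1,4,7 \pmod 9$, with square roots exactly $\pm 2, \pm 16, \pm 7$ by the two-root principle for $x^2 \equiv c \pmod{27}$, $c$ prime to $3$; the syzygy only gives $9 \mid J$ in case (a), and your direct substitution $a_2 = 3b$ into \eqref{defofJ} (every term then carries a factor $27$) correctly closes that gap; the family $x^3y + a_2x^2y^2 + a_3xy^3 + a_4y^4$ has $I = a_2^2 - 3a_3$, $J = -a_2^3 + 3a_2I + 27a_4$ as you claim; and $(3I-1)^2 \equiv 4I^3 \pmod{27}$ whenever $I \equiv 1 \pmod 3$ (both sides are $4 + 9k \pmod{27}$ for $I = 1+3k$). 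One phrase should be tightened: in the sufficiency direction you invoke ``the syzygy once more'' to get $J^2 \equiv 4I^3 \pmod{27}$, but at that stage no form exists yet, so there is no syzygy to appeal to; what you actually need, and have, is that each congruence pair listed in (b)--(d) satisfies $J^2 \equiv 4I^3 \pmod{27}$ by the same mod-$27$ evaluations carried out in your necessity half (e.g.\ $(\pm 2)^2 \equiv 4 \cdot 1^3$, $(\pm16)^2 \equiv 4\cdot 4^3$, $(\pm 7)^2 \equiv 4 \cdot 7^3 \pmod{27}$). With that rewording the proof is complete, and it is in the same spirit as the original argument of Bhargava and Shankar: congruence bookkeeping modulo powers of $3$ for necessity, and explicit forms with $a_0 = 0$, $a_1 = 1$ (the shape that ties binary quartics to Weierstrass cubics) for sufficiency.
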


Let $V_{\R}$ denote the vector space of binary quartic forms over the real numbers $\R$. The  group $\GL_{2}(\R)$ naturally acts on $V_{\R}$. The action of $\GL_{2}(\Z)$  on $V_{\R}$  preserves the lattice $V_{\Z}$ consisting of the integral elements of $V_{\R}$.
The elements of  $V_{\Z}$ are the forms that we are interested in.
Let $V^{(\mathtt{i})}_{\Z}$ denote the set of elements in $V_{\Z}$ having nonzero discriminant 
and $\mathtt{i}$ pairs of complex conjugate roots and $4 -2\mathtt{i}$  real roots.

For any $\GL_{2}(\Z)$-invariant set $S \subseteq V_{\Z}$, let $N(S ; X)$ denote the number of $\GL_{2}(\Z)$-equivalence classes of irreducible elements $f \in S$ satisfying 
$\mathcal{H}(f) < X$. 

For any set $S$ in  $V_{\Z}$ that is definable by congruence conditions, following \cite{BaShSel}, we denote by $\mu_{p}(S)$ the $p$-adic density of the $p$-adic closure of $S$ in  $V_{\Z_p}$, where we normalize the additive measure $\mu_p$ on  $V_{\Z_p}$ so that  $\mu_p(V_{\Z_p})= 1$. The following is  a combination of Theorem 2.11 and  Theorem 2.21 of \cite{BaShSel}.

\begin{thm}[Bhargava--Shankar]\label{BaSh-thm2.11}
Suppose $S$ is a subset of $V_{\Z}$ defined by congruence conditions modulo finitely many prime powers, or even a suitable infinite set of prime powers. Then we have
\begin{equation}
N(S \cap  V_{\Z}^{(\mathtt{i})}; X) \sim N( V_{\Z}^{(\mathtt{i})}; X) \prod_{p} \mu_{p} (S).
\end{equation}
\end{thm}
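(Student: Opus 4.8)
The plan is to reduce the statement to a lattice-point count in a fundamental domain and then to run a sieve on top of it, treating first the case $S = V_\Z$ and then incorporating the congruence conditions. For the full space, I would fix a fundamental set $\mathcal{F}$ for the action of $\GL_2(\Z)$ on the forms in $V_\R$ of nonzero discriminant and $\mathtt{i}$ pairs of complex roots, written as $\mathcal{F} = \mathcal{G} \cdot R_{\mathtt{i}}$ with $\mathcal{G}$ a fundamental domain for $\GL_2(\Z) \backslash \GL_2(\R)$ (via an Iwasawa decomposition $\GL_2(\R) = NAK$) and $R_{\mathtt{i}}$ a bounded set of $\GL_2(\R)$-orbit representatives. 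Then $N(V_\Z^{(\mathtt{i})}; X)$ is the number of irreducible integer points in the truncation $\mathcal{F}_X := \{v \in \mathcal{F} : \mathcal{H}(v) < X\}$. Averaging $\mathcal{F}$ over a fixed compact subset of $\GL_2(\R)$ (Bhargava's averaging trick) and applying Davenport's lattice-point lemma, the main term becomes the volume $\Vol(\mathcal{F}_X)$; since the region $\{(I,J) : \max(|I|^3, J^2/4) < X\}$ has area $\asymp X^{5/6}$, one obtains $N(V_\Z^{(\mathtt{i})}; X) \sim C_{\mathtt{i}} X^{5/6}$ for an explicit $C_{\mathtt{i}} > 0$.

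To impose congruence conditions modulo finitely many prime powers, say with total modulus $m$, I would partition $V_\Z$ into its $m^5$ residue classes modulo $m$, count the integer points of $\mathcal{F}_X$ in each class separately (each class being a coset of the sublattice $m V_\Z$), and retain only the classes belonging to $S$. Applying Davenport's lemma coset by coset replaces the main term by a weighted version, and by the Chinese remainder theorem and the definition of $\mu_p$ the weight factors as $\prod_{p \mid m} \mu_p(S)$, giving the claimed asymptotic with a finite product. To reach a suitable infinite set of prime powers, I would truncate at a large parameter $M$, apply the finite case to the primes $p \le M$, and control the tail: the goal is a uniform estimate showing that the number of irreducible points of $\mathcal{F}_X$ that are \emph{non-maximal} at some prime $p > M$ is $o(X^{5/6})$ as $M \to \infty$, uniformly in $X$, which then licenses interchanging the limits in $X$ and $M$ and completing the product over all $p$.

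The two real obstacles both lie in the geometry of numbers rather than in the sieve bookkeeping. First, $\mathcal{F}_X$ is non-compact, with cusps extending to infinity where a naive lattice-point count would diverge; the saving feature is that integer points deep in the cusp correspond to \emph{reducible} forms (those with a rational linear factor, hence a very small root), so the irreducibility built into the definition of $N$ excises them, but making this quantitative requires a cusp estimate bounding the count of irreducible integer points in the cuspidal region by $o(X^{5/6})$. Second, and more delicate, is the uniformity of the tail estimate above: one must bound, uniformly in $X$, the number of forms that are non-maximal at a large prime $p$, summed over all $p > M$, by a quantity tending to $0$ relative to the main term. This is a squarefree-type sieve on the discriminant, and its uniform version over infinitely many primes is the essential technical ingredient behind Theorem 2.21 of \cite{BaShSel}; I expect it to be the hardest step.
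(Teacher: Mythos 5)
The first thing to say is that the paper does not prove this statement at all: it is imported verbatim as a theorem of Bhargava and Shankar, and the paper's entire justification is the remark that the finite case is Theorem 2.11 of \cite{BaShSel} and the infinite case is Theorem 2.21 there, with ``suitable'' made precise by the acceptable functions of their Subsection 2.7. So the only meaningful comparison is between your sketch and the proof in the cited source, and on that score your reconstruction is architecturally faithful: Bhargava and Shankar do count irreducible integer points in a truncated fundamental domain of the shape $\mathcal{F}\cdot R^{(\mathtt{i})}$, do use the averaging-over-a-compact-set device to tame the cusp, do apply Davenport's lemma so that the main term is a volume of order $X^{5/6}$, and do handle finitely many congruence conditions exactly as you propose --- coset by coset over the sublattice $m V_{\Z}$, with the Chinese remainder theorem factoring the resulting weight into $\prod_p \mu_p(S)$. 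The two obstacles you single out are also the right ones: the excision of reducible points in the cusp is the content of their cusp estimates, and the uniform tail bound over large primes is precisely the uniformity estimate (their Theorem 2.13) that powers the sieve behind Theorem 2.21. As a proof, of course, your text is an outline: both hard estimates are named rather than proved, and they constitute essentially all of the technical work of the cited paper.

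One substantive correction to the last step. You phrase the tail estimate as a bound on forms that are \emph{non-maximal} at some prime $p>M$, but that is too narrow for the theorem as stated and for its use in this paper. The acceptable functions relevant here only satisfy $\phi_p(F)=1$ whenever $p^2\nmid D_F$ (for large $p$), so the set to be excised in the tail is the set of forms with $\phi_p(F)\neq 1$, which is contained in $\{F: p^2\mid D_F\}$ but is generally much larger than the set of non-maximal forms. Indeed, the application in Section 3 of this paper excludes forms congruent to $c\,M_p(x,y)^2 \pmod p$ with $M_p$ quadratic; such forms satisfy $p^2\mid D_F$ yet are typically maximal at $p$ (non-maximality forces factorization as a constant times the fourth power of a linear form mod $p$). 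A tail bound restricted to non-maximal forms would therefore not suffice. The sieve must be run on square divisibility of the discriminant --- i.e., one needs: the number of irreducible $\GL_2(\Z)$-classes with $\mathcal{H}<X$ and $p^2\mid D_F$ for some $p>M$ is $o(X^{5/6})$ uniformly as $M\to\infty$ --- which is exactly the squarefree-type estimate you anticipate, but aimed at the correct, larger set.
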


The  statement of Theorem \ref{BaSh-thm2.11}  for finite number of congruence conditions  follows directly from Theorem 2.11 of \cite{BaShSel}.  
In Subsection 2.7 of \cite{BaShSel},
 some congruence conditions are specified that are suitable for inclusion of  infinitely many primes in the statement of Theorem \ref{BaSh-thm2.11} (see Theorem 2.21 of \cite{BaShSel}).

A function $\phi : V_{\Z} \rightarrow [0, 1]$ is said to be \emph{defined by congruence conditions} if, for all primes $p$, there exist functions 
$\phi_p : V_{\Z_p} \rightarrow [0, 1]$
satisfying the following conditions:\newline
 (1) for all $F \in V_{\Z}$, the product $\prod_{p} \phi_{p}(F)$  converges to $\phi(F)$,\newline
 (2) for each prime $p$,  the function $\phi_p$ is locally constant outside some closed
set $S_p \subset V_{\Z_p}$  of  measure zero.
Such a function $\phi$ is called \emph{acceptable} if, for sufficiently large primes $p$, we have $\phi_p(F) = 1$ whenever $p^2 \nmid D_F$.

For our purpose,  particularly in order to impose congruence conditions modulo the infinitely many primes that are discussed  in Subsection \ref{largeprimesubsection}, we define the acceptable function $\phi : V_{\Z} \rightarrow \{0, 1\}$ to be the characteristic  function of a certain subset  of integral binary quartic forms. More specifically, for $p < 49$, we define $\phi_p$ to be the constant function $1$. For $p > 49$, we define $\phi_p : V_{\Z_p} \rightarrow \{0, 1\}$ to be  the characteristic  function of the set of integral binary quartic forms that are not  factored as $c_p M_p (x , y)^2$ modulo $p$, with  $c_p \in \mathbb{F}_p$ and $M_p(x , y)$ any quadratic form over $\mathbb{F}_p$. Then 
\begin{equation}\label{defofaccept}
\phi(F) = \prod_{p} \phi_{p}(F)
\end{equation} is the characteristic function of the set of integral binary quartic forms that are not  factored as $c_p M_p (x , y)^2$ over $\mathbb{F}_p$ for any $p > 49$.
We denote by $\lambda(p)$ the $p$-adic density 
$\int_{F \in V_{\Z_p}} \phi_p(F) dF
$.
 The value of $\lambda(p)$ will be computed in \eqref{largedensity}. It turns out that in Theorem \ref{mainquartic}, the positive proportion of integral  binary quartic  forms that do not represent $h$  is  bounded below by 
 $$
\mu = \kappa(h) \prod_{p} \lambda(p),
$$
 where $p$ ranges over all primes and $\kappa(h)$ is a constant that only depends on $h$ and can be explicitly determined 
 from   \eqref{finaldensity} in Section 6.

Later in our proofs, in order to construct many inequivalent  quartic forms, it will be important to work with quartic forms that have no non-trivial stabilizer in $\GL_2(\mathbb{Z})$.
We note that the stabilizer in $\GL_2(\mathbb{Z})$ of an element in $V_{\mathbb{R}}$  always contains the identity matrix and its negative, and has size at least $2$.
We will appeal to another  important  result due to Bhargava and Shankar, which bounds the number of $\GL_{2}(\mathbb{Z})$-equivalence classes of integral binary quartic forms having large stabilizers inside 
$\GL_{2}(\mathbb{Z})$. 
\begin{prop}[\cite{BaShSel}, Lemma 2.4]\label{BSL2.4}
 The number of $\textrm{GL}_{2}(\mathbb{Z})$-orbits of integral binary quartic forms 
 $F \in V_{\mathbb{Z}}$ such that $D_F \neq 0$  and $\mathcal{H}(F) < X$ whose stabilizer in $\GL_{2}(\mathbb{Q})$ has size greater than $2$  is $O(X^{3/4 + \epsilon})$.
 \end{prop}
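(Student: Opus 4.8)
The plan is to reduce the statement to counting the quartic forms that carry an \emph{extra rational involution}, and then to count these by fibering over the involution. For every binary quartic form the scalars $\pm I_2$ lie in $\Stab_{\GL_2(\Q)}(F)$ and act trivially on $\P^1$, so the hypothesis $|\Stab_{\GL_2(\Q)}(F)| > 2$ is equivalent to the existence of a nontrivial element $\sigma$ in the image of $\Stab_{\GL_2(\Q)}(F)$ inside $\PGL_2(\Q)$. When $D_F \neq 0$ the four roots of $F(x,1)$ are distinct, and the stabilizer in $\PGL_2(\overline{\Q})$ of the corresponding four points on $\P^1$ contains, and for all but finitely many cross-ratios equals, the Klein four-group whose three nontrivial elements are the involutions realizing the three pairings of the roots; the finitely many more symmetric configurations (harmonic and equianharmonic) are even rarer and may be absorbed into the final bound. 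Thus $|\Stab_{\GL_2(\Q)}(F)| > 2$ holds exactly when at least one pairing is Galois-stable, i.e. when the resolvent cubic of $F$ has a rational root; since that cubic is monic with integer coefficients, the root is in fact an integer $r$. Each such involution $\sigma$ is determined by its two fixed points on $\P^1$, which I encode by a primitive integral binary quadratic form $q = q_\sigma$ having those points as its roots.

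Next I would fiber the count over $\sigma$, that is, over the $\GL_2(\Z)$-class of $q$. After replacing $F$ by a suitable $\GL_2(\Z)$-translate I may assume $q$ is reduced; the forms fixed by $\sigma_q$ then sweep out a $3$-dimensional subspace $W_q \subset V_\R$. For the simplest case, in which $\sigma_q$ fixes $[1:0]$ and $[0:1]$ and is represented by $\diag(1,-1)$, the space $W_q$ consists of the biquadratic forms $a_0 x^4 + a_2 x^2 y^2 + a_4 y^4$, with $I = a_2^2 + 12 a_0 a_4$ and $J = 2a_2(a_2^2 - 36 a_0 a_4)$; every other $W_q$ is a $\GL_2(\Q)$-translate of this model. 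For each reduced $q$ I would estimate the number of integer points of $W_q \cap V_\Z$ with $\mathcal{H}(F) < X$ by geometry of numbers, and then sum over the admissible $q$, which are bounded in size because $\mathcal{H}(F) < X$ constrains both the fixed-point form $q$ and the coefficients of $F$ inside each $W_q$.

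The heart of the matter --- and the step I expect to be the main obstacle --- is to make this two-parameter count sharp enough to reach $O(X^{3/4+\epsilon})$, which is of strictly smaller order than the main term $N(V_\Z^{(\mathtt{i})}; X)$ (of order $X^{5/6}$) supplied by Theorem \ref{BaSh-thm2.11}. A crude volume bound for each $W_q$, summed over all $q$, only recovers the trivial estimate $O(X^{5/6})$ and so proves nothing; the genuine saving must come from reduction theory. The point is that imposing an extra involution pushes the relevant orbits into the cusp of the fundamental domain for the $\GL_2(\Z)$-action, where the number of lattice points is governed by the volume of a lower-dimensional cusp cross-section: fixing the involution removes one codimension, dropping the per-fiber count to $O(X^{2/3+\epsilon})$, and the admissible involutions (equivalently the range of the resolvent root $r$) are few enough that the combined total stays at $O(X^{3/4+\epsilon})$. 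A secondary technical difficulty is the descent from $\Q$ to $\Z$: since $\sigma$ and the matrix reducing $F$ to $W_q$ are only rational, I must verify that each offending $\GL_2(\Z)$-orbit is caught by some reduced integral $q$ and is counted with bounded multiplicity, keeping the resulting denominators under control.
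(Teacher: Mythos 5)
First, a point of comparison: the paper never actually proves this proposition --- it is imported verbatim from Bhargava--Shankar [\cite{BaShSel}, Lemma 2.4] and used as a black box --- so your attempt has to stand on its own as a complete proof, and it does not. Your opening reduction is sound in outline and is the standard first move: any element of $\Stab_{\GL_2(\Q)}(F)$ other than $\pm 1$ has nontrivial image in $\PGL_2(\Q)$; generically that image is an involution, a rational involution pairs the four roots Galois-stably, and a Galois-stable pairing forces the monic integral resolvent cubic (one may take $x^3-3Ix+J$) to have an integer root. But there is a loophole you acknowledge only in passing and never close: in the equianharmonic case $I_F=0$, where the geometric stabilizer is $A_4$, the extra rational element can have order $3$, and then no rational involution and no rational root of the resolvent need exist. (The harmonic case is harmless, since a rational element of order $4$ squares to a rational involution.) These exceptional forms are not ``finitely many'' and not obviously negligible: $I=0$ still leaves $|J|\ll X^{1/2}$ free, and absorbing them requires exactly the kind of per-$(I,J)$ class-number bound that your sketch nowhere provides.

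The decisive gap, however, is the count itself, which is the entire content of the lemma. You correctly observe that a crude volume bound over the fixed spaces $W_q$ gives only the trivial $O(X^{5/6})$, but what replaces it is pure assertion: ``fixing the involution removes one codimension'' (in fact $W_q$ has codimension $2$ in $V_{\R}$), the per-fiber count ``drops to $O(X^{2/3+\epsilon})$'', and the admissible $q$ are ``few enough'' that the total is $O(X^{3/4+\epsilon})$. None of these claims is derived: there is no bound on the number of reduced admissible quadratics $q$, no lattice-point lemma, and no consistency check (with a per-fiber bound of $X^{2/3+\epsilon}$ you would need only $O(X^{1/12+\epsilon})$ fibers, which is nowhere argued). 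The $\Q$-versus-$\Z$ multiplicity problem --- that a single offending $\GL_2(\Z)$-orbit may meet a given $W_q$ in many lattice points, or meet many different $W_q$ --- is flagged but unresolved, and it is precisely where such an argument can lose powers of $X$. A route that can be completed is to fiber over $(I,J)$ rather than over involutions: an integer root $r$ of the resolvent satisfies $|r|^3\leq 3|I||r|+|J|$, hence $|r|\ll X^{1/6}$, while $|I|\leq X^{1/3}$ and $J$ is then determined by $(I,r)$, so at most $O(X^{1/2})$ pairs $(I,J)$ arise from forms with extra stabilizer (the case $I=0$ included); it then remains to prove, by reduction theory of binary quartics, that any fixed pair $(I,J)$ with $\mathcal{H}(I,J)<X$ supports at most $O(X^{1/4+\epsilon})$ classes with nonzero discriminant. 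As it stands, your proposal is a plausible plan whose quantitative core --- the only hard part of the lemma --- is missing.
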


\section{Quartic Forms Splitting Modulo a Prime}\label{splitsection}

\textbf{Definition}. We define the subset $V'_{\mathbb{Z}}$ of integral binary quartic forms $V_{\mathbb{Z}}$ to be those forms $F$ that have trivial stabilizer (of size $2$).

By Proposition \ref{BSL2.4},  $V'_{\mathbb{Z}}$ is a dense subset of equivalence  classes of quartic forms  and selecting our forms from $V'_{\mathbb{Z}}$ will not alter the $p$-adic densities that we will present later. From now on we will work only with classes of  forms in $V'_{\mathbb{Z}}$.

\textbf{Definition}.  Assume that $F(x , y)$ is an irreducible quartic form.  We say that $F(x , y)$ \emph{splits completely} modulo a prime number $p$, if  either
\begin{equation}\label{splitgI}
 F(x , y) \equiv m_{0} (x - b_{1}y)(x-b_{2}y) (x-b_{3}y)(x- b_{4}y)\, \, (\textrm{mod} \, \,   p),
 \end{equation}
 or
 \begin{equation}\label{splitgII}
 F(x , y) \equiv m_{0} y(x-b_{2}y) (x-b_{3}y)(x- b_{4}y)\, \, (\textrm{mod} \, \,   p),
 \end{equation}
  where $m_{0} \not \equiv 0$ (mod $p$), and $b_{1}, b_{2}, b_{3}, b_{4}$ are distinct integers modulo $p$, and further
  \begin{equation}\label{assumemore}
    b_{2}, b_{3}, b_{4} \not \equiv 0  \, \, \qquad  (\textrm{mod} \, \, p).
    \end{equation}
   In case \eqref{splitgI}, we
  call $b_1$, $b_2$, $b_3$, and $b_4$ the \emph{simple roots} of the binary form $F(x , y)$ modulo $p$.   In case \eqref{splitgII}, we
  call $\infty$, $b_2$, $b_3$, and $b_4$ the \emph{simple roots} of the binary form $F(x , y)$ modulo $p$.

    Let $p \geq 5$ be a prime. The $p$-adic density of binary quartic forms that split completely modulo $p$ is given by
 \begin{eqnarray}\label{splitdensity}
    \mu_{p} &= & \frac{ (p -1) \left( \frac{p (p-1)(p-2) (p-3) }{4!}  + \frac{(p-1)(p-2)(p-3)} {3!} \right)  }{p^5}\\ \nonumber
   & =& \frac{ (p -1)^2 (p+4) (p-2) (p-3)     }{4! \, p^5},
    \end{eqnarray}
where in the first identity in \eqref{splitdensity}, the summand $\frac{p (p-1)(p-2) (p-3) }{4!}$ in the numerator counts the corresponding forms in \eqref{splitgI} and  the summand 
$\frac{(p-1)(p-2)(p-3)} {3!}$ counts the corresponding forms in \eqref{splitgII}.
Clearly the factor $p -1$ in the numerator counts the number of possibilities for $m_{0}$ modulo $p$ and  the denominator $p^5$ counts all quartic forms with all choices for their five coefficients modulo $p$.

Now assume $F(x , y)$ is an irreducible integral quartic form that splits completely modulo $p$.
 For $j\in \{ 1, 2, 3, 4\}$, we define
 \begin{equation}\label{defofFb}
 F_{b_{j}}(x , y)  : =  F(p x + b_{j} y, y),
 \end{equation}
 and additionally   in case \eqref{splitgII},
 \begin{equation}\label{defofFinf}
F_{\infty}(x , y) := F (p y , x).
\end{equation}

We claim that 
the four forms  $F_{b_{1}}(x , y)$ (or $F_{\infty}(x,y)$),  $F_{b_{2}}(x , y)$,  $F_{b_{3}}(x , y)$, and  $F_{b_{4}}(x , y)$ are pairwise inequivalent.  Indeed, any transformation $B\in\GL_2(\Q)$ taking, say $F_{b_i}(x,y)$ to $F_{b_j}(x,y)$ must be of the form $B=\bigl(\begin{smallmatrix}p&b_i\\ 0& 1\end{smallmatrix}\bigr)^{-1}\!A\bigl(\begin{smallmatrix}p &b_j\\ 0& 1\end{smallmatrix}\bigr)$, where $A\in\GL_2(\Q)$ stabilizes $F(x,y)$.  
Since we assumed $F \in V'_{\mathbb{Z}}$, the $2 \times 2$ matrix $A$ must be the identity matrix or its negative, and so $B= \pm \bigl(\begin{smallmatrix}p&b_i\\ 0& 1\end{smallmatrix}\bigr)^{-1}\bigl(\begin{smallmatrix}p&b_j\\ 0& 1\end{smallmatrix}\bigr)$.  But $B\notin\GL_2(\Z)$, as $p \nmid (b_i-b_j)$. Therefore, for $i \neq j$, 
the quartic forms $F_{b_i}(x,y)$ and $F_{b_j}(x,y)$  are not $\GL_2(\Z)$-equivalent.

Similarly in case \eqref{splitgII}, any  transformation $B\in\GL_2(\Q)$ taking $F_{\infty}(x,y)$ to $F_{b_j}(x,y)$ must be of the form $B= \bigl(\begin{smallmatrix}0&p\\ 1& 0\end{smallmatrix}\bigr)^{-1}\!A\bigl(\begin{smallmatrix}p &b_j\\ 0& 1\end{smallmatrix}\bigr)$, where $A\in\GL_2(\Q)$ stabilizes $F(x,y)$.  
This change-of-variable matrix does not belong to $\GL_{2}(\mathbb{Z})$, unless $b_{j}\equiv 0$ (mod $p$).
Therefore,  $F_{\infty}(x , y)$, $F_{b_2}(x , y)$, $F_{b_{3}}(x , y)$, and  $F_{b_{4}}(x , y)$ are pairwise inequivalent, as long as none of $b_{2}$, $b_{3}$ and $b_{4}$ are a multiple of $p$ (this motivated the extra assumption \eqref{assumemore} in our definition).
 Starting with  a form $F$ that belongs to $V'_{\mathbb{Z}}$ and splits completely modulo $p$, we can construct $4$ integral quartic forms that are pairwise inequivalent.

Let 
$
F(x , y) = a_{0}x^4 + a_{1} x^{3} y +a_{2} x^{2} y^2 + a_{3} x y^3+ a_{4}y^4 \in \mathbb{Z}[x , y],
$ 
with content $1$ (i.e., the integers $a_{0}, a_{1},  a_{2}, a_{3},  a_{4}$ have no common prime divisor).
If $F(x , y)$ satisfies \eqref{splitgII} then
\begin{equation}\label{deftildeinf}
\tilde{F}_{\infty}(x , y):= \frac{ F_{\infty}(x , y)}{p} \in \mathbb{Z}[x , y],
\end{equation}
where $F_{\infty}(x , y)$ is defined in \eqref{defofFinf}.
Suppose that 
\begin{equation}\label{alessp4}
F (b , 1) \equiv 0 \, \, \, (\textrm{mod}\, \, p), \,  \, \, \textrm{with}\, \, b \in \mathbb{Z}.
\end{equation}
 By \eqref{defofFb},
\begin{equation*}\label{Faei4}
F_{b}(x , y) =    F(p x + by , y) =  e_{0} x^4 + e_{1} x^{3} y +e_{2} x^{2} y^2 + e_{3} x y^3+ e_{4}y^4,
\end{equation*}
with
\begin{equation}\label{dotss4}
 e_{4-j} = p^j \sum_{i=0}^{4-j} a_{i} \, b^{4-i-j} {4-i \choose j},
 \end{equation}
 for $j=0, 1, 2, 3, 4$.
  If $j \geq 1$, clearly $e_{4-j}$ is divisible by $p$.  Since  $e_{4} = F(b , 1)$, by \eqref{alessp4}, $e_{4}$ is also divisible by $p$. Therefore,
  \begin{equation}\label{deftildeb}
\tilde{F_{b}}(x , y): = \frac{F_{b}(x , y)}{p} \in  \mathbb{Z}[x , y].
\end{equation}
 Since $e_{3} = p f'(b)$, where $f'(X)$ denotes the derivative of polynomial $f(X) = F(X , 1)$, if $b$ is a simple root modulo $p$  then $f'(b)\not \equiv 0\, \, (\textrm{mod}\, p)$ and  
\begin{equation}\label{yL4}
\tilde{F_{b}}(x , y) = y^{3} L(x , y)\, \,  (\textrm{mod}\, \, p),
\end{equation}
 where $L(x , y)= l_{1}x + l_{2}y$  is a linear  form modulo $p$, with $l_{1} \not \equiv 0\pmod p$.  
 
 We also note that $\mathcal{H}(F_b)$, defined in \eqref{Bash}, as well as  the invariants of the form $F_b$,  can be expressed in terms of invariants of the form $F$, as $F_b$  is obtained under the action of a $2 \times 2$ matrix of determinant $\pm p$ on $F$. By \eqref{Idet}, \eqref{Jdet}, and \eqref{Hdet}, we have
  \begin{eqnarray*}\label{Hoftilde}
D_{F_{b}} & =& p^{12} D_F,\\
I_{{F_{b}}} &= & p^4 I_{{F}}, \\
 J_{{F_{b}}} &= & p^6 J_{F}, \\
 \mathcal{H}\left({F_{b}}\right) & =&  \mathcal{H}\left(I_{{F_{b}}}, J_{{F_{b}}} \right) = p^{12}  \mathcal{H}(F).
\end{eqnarray*}
After multiplication of the form $F_{b}(x , y)$ by $p^{-1}$, we therefore have
\begin{eqnarray}\nonumber
D_{\tilde{F_{b}}} & =& p^{6} D_F\\ \nonumber
I_{\tilde{F_{b}}} &= &  p^2 I_{{F}}, \\ \nonumber
 J_{\tilde{F_{b}}} &= &  p^3 J_{F}, \\
 \mathcal{H}\left(\tilde{F_{b}}\right) & =&  \mathcal{H}\left(I_{\tilde{F_{b}}}, J_{\tilde{F_{b}}} \right) = p^{6}  \mathcal{H}(F).
\end{eqnarray}

Now let us consider the quartic Thue equation
$$
F(x , y) = m,
$$
where $m =  p_{1} p_{2} p_{3} h$, and  $p_{1}$, $p_{2}$, and $p_{3}$ are three distinct primes greater than $4$, and $\gcd(h, p_{k}) = 1$, for $k\in \{ 1, 2, 3\}$. We will further assume that the quartic form $F(x , y)$ splits completely modulo $p_{1}$, $p_{2}$, and $p_{3}$.  In Lemma \ref{corresponds-sol}, we will construct $64$ integral binary quartic forms $G_{j}(x , y)$, for $1 \leq j \leq  4^3$, and will  make a one-to-one correspondence between the set of primitive solutions of $F(x , y) = m$ and the union of the sets of  primitive solutions  of $G_{j}(x , y) = h$, for $1 \leq j \leq  4^3$.  First we need two auxiliary lemmas.

\begin{lemma}\label{lem1corres}
Let $F(x , y) \in \mathbb{Z}[x , y]$ be a binary quartic form that splits completely modulo $p$ and $m = p m_1$, with  $p \nmid m_1$. The primitive solutions of the Thue equation $F(x , y) = m$ are in one-to-one correspondence with the union of the sets of primitive solutions to four  Thue equations
$$
\tilde{F}_{i}(x , y) = m_1,
$$
where $\tilde{F}_{i}(x , y)$ are defined in \eqref{deftildeinf} and  \eqref{deftildeb}, and $i=1, 2, 3, 4$.
\end{lemma}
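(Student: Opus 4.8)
The plan is to exhibit an explicit bijection between the primitive solutions of $F(x,y)=m$ and the disjoint union of the primitive solution sets of the four equations $\tilde F_i(x,y)=m_1$. First I would take a primitive solution $(x_0,y_0)$ of $F(x,y)=m$. Since $p\mid m$ and $F$ splits completely modulo $p$, reducing the relation $F(x_0,y_0)\equiv 0\pmod p$ shows that $(x_0:y_0)$ reduces to one of the simple roots of $F$ modulo $p$ (either some $b_i$ in case \eqref{splitgI}/\eqref{splitgII}, or the root $\infty$ in case \eqref{splitgII}). Because the $b_i$ (together with $\infty$) are \emph{distinct} modulo $p$, exactly one root is hit, which is what makes the correspondence single-valued and the four solution sets disjoint. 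This dichotomy is the conceptual heart of the argument and must be checked carefully; I expect it to be the main obstacle, since one has to rule out the possibility that a primitive solution reduces to a point that is not a simple root.

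Concretely, suppose $(x_0,y_0)$ reduces to the simple root $b_j$, i.e.\ $x_0\equiv b_j y_0\pmod p$ with $p\nmid y_0$ (primitivity forces $p\nmid y_0$ here, since $p\mid x_0$ and $p\mid y_0$ simultaneously is excluded). Then I would write $x_0=p x_1+b_j y_0$ for a unique integer $x_1$, and set $y_1=y_0$. Unwinding the definition \eqref{defofFb}, $F(x_0,y_0)=F(px_1+b_j y_1,y_1)=F_{b_j}(x_1,y_1)=m$, and dividing by $p$ using \eqref{deftildeb} gives $\tilde F_{b_j}(x_1,y_1)=m/p=m_1$. The pair $(x_1,y_1)$ is primitive: any common prime divisor $q$ of $x_1$ and $y_1$ would divide $y_0$ and $x_0=px_1+b_jy_0$, contradicting $\gcd(x_0,y_0)=1$. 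In case \eqref{splitgII} a solution reducing to the root $\infty$ means $p\mid y_0$ (hence $p\nmid x_0$), and I would instead use \eqref{defofFinf}: write $y_0=px_1$, $x_0=y_1$, so that $F(x_0,y_0)=F(py_1', \dots)$ — more precisely $F_\infty(y_1,x_1)=F(px_1,y_1)=m$ after matching variables to the convention in \eqref{defofFinf}, and \eqref{deftildeinf} yields $\tilde F_\infty(\,\cdot\,)=m_1$, again with a primitive pair.

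For the reverse direction, given a primitive solution $(x_1,y_1)$ of $\tilde F_j(x,y)=m_1$, I would apply the inverse substitution: in the finite-root case set $(x_0,y_0)=(px_1+b_jy_1,\,y_1)$, and in the $\infty$ case the corresponding inverse of \eqref{defofFinf}. Multiplying $\tilde F_j=m_1$ back by $p$ recovers $F(x_0,y_0)=m$, and a short gcd computation shows $(x_0,y_0)$ is primitive. Finally I would verify that the two maps are mutually inverse and that the images coming from distinct indices $j$ are genuinely disjoint: a solution cannot reduce to two distinct simple roots modulo $p$ simultaneously, precisely because the roots $b_1,b_2,b_3,b_4$ (and $\infty$) are distinct modulo $p$ by hypothesis. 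Collecting the four maps then establishes the claimed one-to-one correspondence between primitive solutions of $F(x,y)=m$ and the union over $i\in\{1,2,3,4\}$ of primitive solutions of $\tilde F_i(x,y)=m_1$.
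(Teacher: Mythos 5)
Your proposal is correct and follows essentially the same route as the paper's own proof: reduction of a primitive solution modulo $p$ singles out a unique simple root (distinctness of the roots plus primitivity), the substitution $x_{0}=px_{1}+b_{j}y_{0}$ transfers solutions to $\tilde{F}_{b_j}(x,y)=m_1$ and back, and the root $\infty$ is handled by the swap-and-divide substitution, with primitivity preserved in both directions. The only wrinkles are minor: your variable-matching trouble in the $\infty$ case traces to a transposition typo in \eqref{defofFinf} itself (the definition consistent with \eqref{deftildeinf} is $F_{\infty}(x,y)=F(y,px)$), and your ``short gcd computation'' in the reverse direction needs the paper's observation that $p\mid y_1$ is impossible because it would force $p^{4}\mid F(x_0,y_0)=pm_1$, contradicting $p\nmid m_1$.
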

\begin{proof}
Assume that $(x_{0}, y_{0}) \in \mathbb{Z}^2$ is a solution to $F(x , y) =  m = p m_{1}$.  
If
$$
 F(x , y) \equiv m_{0} (x - b_{1}y)(x-b_{2}y) (x-b_{3}y)(x- b_{4}y)\, \, (\textrm{mod} \, \,   p),
 $$
then
since  
$
p| F(x_0 , y_0)
$,
we have
$$
p| (x_{0}- b_{i} y_0)
$$
for some $i \in \{1, 2, 3, 4\}$. The value of $i$ is uniquely determined by the solution $(x_{0}, y_{0})$, as $b_{j}$'s are distinct modulo $p$. Therefore, 
\begin{equation}\label{x0X}
x_{0} = p_1 X_{0} + b_{i} y_{0},
\end{equation}
for some $ X_{0} \in \mathbb{Z}$, and $(X_{0}, y_{0})$ is a solution to 
\begin{equation}\label{redmp}
\tilde{F}_{i}(x , y)  = \frac{1}{p} F(p x + b_{i} y , y) = m_{1} = \frac{m}{p}.
\end{equation}
Conversely, assume for a fixed $i \in \{1, 2, 3, 4\}$ that  $(X_{0}, y_{0}) \in \mathbb{Z}^2$ is a solution to 
$$
\tilde{F}_{i}(x , y)  = \frac{1}{p} F(p x + b_{i} y , y) = m_{1} = \frac{m}{p}.
$$
First we observe that $p \nmid y_{0}$. Because otherwise $p$ divides  $p X_0 + b_{i} y_0$ and $p^4 \mid \frac{m}{p}$, which is a contradiction.
Now by construction of the form $\tilde{F}_{i}(x , y)$, we clearly have $(x_0 , y_{0})$, with
$$
x_{0} = p X_{0} + b_{i} y_{0},
$$
 satisfies  the equation $F(x , y) = m$. Further, if $(X_{0}, y_{0})$ is a primitive solution of 
 $\tilde{F}_{i}(x , y)  = \frac{m}{p}$, since $p \nmid y_0$, we have $\gcd(x_0 , y_0) = 1$. 
 
 Assume that
 $$
 F(x , y) \equiv m_{0} y (x-b_{2}y) (x-b_{3}y)(x- b_{4}y)\, \, (\textrm{mod} \, \,   p).
 $$
The pair $(x_{0} , y_{0}) \in \mathbb{Z}^2$ with $p \nmid y_{0}$ is a primitive solution 
 of 
 $$
F(x , y) = p m_1,
$$
if and only if $p \mid (x_0-b_{2}y_0) (x_0-b_{3}y_0)(x_0- b_{4}y_0)$. In this case, for a unique $i \in \{2, 3, 4\}$, we have \eqref{x0X}, and  $(X_0, y_0)$ is a primitive solution to the Thue equation \eqref{redmp}.
Similarly, the pair $(x_{1} , y_{1}) \in \mathbb{Z}^2$ with $p \mid y_{1}$ is a primitive solution 
 of 
 $$
F(x , y) = p m_1,
$$
if and only if $(Y_1, x_{1})$, with $Y_1 = \frac{y_1}{p}$, is a  primitive solution to 
$$\tilde{F}_{\infty}(x , y) = \frac{m}{p}.$$
\end{proof}

\begin{lemma}\label{lem2}
If  $F(x, y)$ splits completely modulo $p_1$ and $p_2$, then $\tilde{F}_{b}(x , y)$ will also split completely modulo $p_2$,  for any simple root $b$ (possibly $\infty$) of $F(x , y)$ modulo $p_1$.
\end{lemma}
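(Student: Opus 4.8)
The plan is to exploit the fact that $\tilde F_b$ arises from $F$ by a linear substitution whose matrix is not in $\GL_2(\Z)$ but becomes invertible upon reduction modulo $p_2$. For a finite simple root $b$, the form $F_b(x,y)=F(p_1x+by,y)$ of \eqref{defofFb} is the image of $F$ under $M=\bigl(\begin{smallmatrix}p_1&b\\0&1\end{smallmatrix}\bigr)$, of determinant $p_1$, while for $b=\infty$ the relevant matrix $\bigl(\begin{smallmatrix}0&p_1\\1&0\end{smallmatrix}\bigr)$ from \eqref{defofFinf} has determinant $-p_1$. Since $p_1\neq p_2$, this determinant is a unit modulo $p_2$, so in either case $M\bmod p_2$ lies in $\GL_2(\mathbb{F}_{p_2})$. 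This is the structural observation that drives the whole argument.

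First I would reduce everything modulo $p_2$. Because $F$ splits completely modulo $p_2$, over $\mathbb{F}_{p_2}$ it equals a nonzero scalar times a product of four pairwise non-proportional linear forms; equivalently, $F$ has four distinct roots in $\mathbb{P}^1(\mathbb{F}_{p_2})$. Applying the invertible map $M$ transforms these roots by the associated M\"obius action, a bijection of $\mathbb{P}^1(\mathbb{F}_{p_2})$, and hence carries the root set of $F$ onto the root set of $F_b$ bijectively; in particular $F_b$ still has four distinct roots modulo $p_2$. Dividing by $p_1$, a unit modulo $p_2$, does not alter the reduction, so $\tilde F_b=F_b/p_1$ likewise has four distinct roots in $\mathbb{P}^1(\mathbb{F}_{p_2})$. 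This already gives the substantive content of the claim.

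The one delicate point, which I expect to be the main obstacle, is to match the precise normalization built into the definition of ``splits completely'', namely that at most one of the four roots modulo $p_2$ lies in $\{0,\infty\}$ (this is what the conditions $b_2,b_3,b_4\not\equiv 0$ and the dichotomy between \eqref{splitgI} and \eqref{splitgII} encode). A short computation with the coefficients of $F_b$ shows that $\infty$ is a root of $\tilde F_b$ modulo $p_2$ precisely when $a_0\equiv 0\pmod{p_2}$, i.e.\ when $\infty$ is already a root of $F$ modulo $p_2$, and that $0$ is a root of $\tilde F_b$ modulo $p_2$ precisely when $F(b,1)\equiv 0\pmod{p_2}$. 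When $b=\infty$ these two conditions become ``$0$ is a root of $F$'' and ``$\infty$ is a root of $F$'', which cannot hold together since $F$ splits completely modulo $p_2$; so that case is automatic. When $b$ is finite, both can hold only if $F$ is of type \eqref{splitgII} modulo $p_2$ and the integer $b$ happens to reduce modulo $p_2$ to one of the finite roots of $F$. I would remove this coincidence using the freedom in the integer representative of $b$: since $\gcd(p_1,p_2)=1$, replacing $b$ by $b+p_1t$ fixes the root modulo $p_1$, replaces $\tilde F_b$ by the $\GL_2(\Z)$-equivalent form $\tilde F_b(x+ty,y)$, and lets $b\bmod p_2$ run through every residue class. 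Choosing the representative so that $b$ is not a root of $F$ modulo $p_2$ --- possible because $F$ has at most four roots and $p_2\geq 5$ --- forces $0$ not to be a root of $\tilde F_b$, so that at most one root (namely $\infty$, and only in the type \eqref{splitgII} case) lies in $\{0,\infty\}$. Hence $\tilde F_b$ splits completely modulo $p_2$ in the exact sense of the definition.
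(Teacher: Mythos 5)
Your proof is correct, and its engine is the same one underlying the paper's argument: the substitution matrix has determinant $\pm p_1$, a unit modulo $p_2$, so its reduction acts on $\mathbb{P}^1(\F_{p_2})$ as an invertible M\"obius transformation carrying the four distinct roots of $F$ to four distinct roots of $\tilde F_b$. Where you genuinely diverge is in the treatment of the normalization \eqref{assumemore}. The paper proceeds by explicit case computation, writing down the transformed roots in three situations --- $b$ finite with $F$ of type \eqref{splitgI} mod $p_2$, and $b=\infty$ with $F$ of type \eqref{splitgI} or of type \eqref{splitgII} mod $p_2$ --- and in each of these the normalization comes out automatically; but it omits exactly the case you flag as delicate, namely $b$ finite with $F$ of type \eqref{splitgII} mod $p_2$. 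There $\infty$ is always a root of $\tilde F_b$ mod $p_2$, and $0$ is also a root precisely when the chosen integer representative $b$ is congruent mod $p_2$ to one of the finite roots of $F$, so condition \eqref{assumemore} can fail for $\tilde F_b$ as literally written, even though its four roots remain distinct in $\mathbb{P}^1(\F_{p_2})$. Your repair --- replacing $b$ by $b+p_1t$, which changes $\tilde F_b$ only by the unimodular substitution $(x,y)\mapsto(x+ty,y)$ and, since $p_1$ is invertible mod $p_2$, lets $b \bmod p_2$ avoid the at most four roots of $F$ mod $p_2$ (possible as $p_2\geq 5$) --- has no counterpart in the paper, and it is what one needs either to restore \eqref{assumemore} or, equivalently, to read the lemma up to $\GL_2(\Z)$-equivalence, which is all that its application inside Lemma \ref{corresponds-sol} requires, since the solution correspondence of Lemma \ref{lem1corres} uses only distinctness of the roots. (Two minor remarks: the paper's formula $c'_j=p_1c_j+b$ for the transformed roots should be $c'_j=p_1^{-1}(c_j-b)$, but both are injective affine maps of $\F_{p_2}$, so the distinctness conclusion is unaffected; and your observation that at most one root of $F$ lies in $\{0,\infty\}$ mod $p_2$ is exactly the right invariant reformulation of \eqref{assumemore}.) In short: same structural idea, but your write-up covers, and fixes, a case that the paper's case analysis skips.
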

\begin{proof}
If
$$
F(x , y) \equiv m_{0} (x - b_1 y) (x - b_2 y) (x - b_3 y) (x - b_4 y) \,  \qquad (\textrm{mod} \, \, p_{1})
$$
and 
\begin{equation}\label{ciroots}
F(x , y) \equiv m'_{0} (x - c_1 y) (x - c_2 y) (x - c_3 y) (x - c_4 y) \,  \qquad (\textrm{mod} \, \, p_{2}),
\end{equation}
then for any $b \in \{ b_{1}, b_{2}, b_{3}, b_{4}\}$, we have
$$
\tilde{F_{b}}(x , y) \equiv  m''_{0}(x - c'_1 y) (x - c'_2 y) (x - c'_3 y) (x - c'_4 y) \,  \qquad (\textrm{mod} \, \, p_{2}),
$$
where 
$$
c'_{j} = p_{1} c_{j} + b.
$$
The integers  $c'_1, c'_2 , c'_3 , c'_4$ are   indeed distinct  modulo  $p_{2}$, as  $c_1, c_2 , c_3 , c_4$ are so and $p_{1}$ is invertible modulo $p_{2}$. We conclude that the quartic form $ \tilde{F}_{b}(x , y)$  splits completely modulo $p_{2}$, as well.

If 
 $$
F(x , y) \equiv m_{0} y (x - b_2 y) (x - b_3 y) (x - b_4 y) \,  \qquad (\textrm{mod} \, \, p_{1})
$$
and \eqref{ciroots} holds, then 
$$
\tilde{F}_{\infty}(x , y) \equiv  m''_{0}(x - c'_1 y) (x - c'_2 y) (x - c'_3 y) (x - c'_4 y) \,  \qquad (\textrm{mod} \, \, p_{2}),
$$
with $c'_i = c^{-1}_{i}$ modulo $p_2$, where $0$ and $\infty$ are considered to be  the inverse of each other modulo $p_2$. Namely, if $c_1 =0$ modulo $p_2$, we get 
$$
\tilde{F}_{\infty}(x , y) \equiv  m''_{0} y (x - c'_2 y) (x - c'_3 y) (x - c'_4 y) \,  \qquad (\textrm{mod} \, \, p_{2}).
$$

If
$$
F(x , y) \equiv m_{0} y (x - b_2 y) (x - b_3 y) (x - b_4 y) \,  \qquad (\textrm{mod} \, \, p_{1})
$$
and 
\begin{equation*}
F(x , y) \equiv m'_{0} y (x - c_2 y) (x - c_3 y) (x - c_4 y) \,  \qquad (\textrm{mod} \, \, p_{2}),
\end{equation*}
then
$$
\tilde{F}_{\infty}(x , y) \equiv  m''_{0}x  (x - c'_2 y) (x - c'_3 y) (x - c'_4 y) \,  \qquad (\textrm{mod} \, \, p_{2}),
$$
 with $c'_i = c^{-1}_{i}$ modulo $p_2$. Therefore, if $F(x, y)$ splits completely modulo $p_1$ and $p_2$, the $\tilde{F}_{b}(x , y)$ will also split completely modulo $p_2$,  for any simple root $b$  of $F(x , y)$ modulo $p_1$.
\end{proof}

\begin{lemma}\label{corresponds-sol}
Let $h$ be an integer, and
$p_1$, $p_{2}$, and $p_{3}$ be three distinct primes greater than $4$ that do not divide $h$. Let $F(x, y) \in \mathbb{Z}[x , y]$ be a  binary quartic form that splits completely modulo primes $p_{1}$, $p_{2}$, and $p_{3}$. Then there are $64$  binary quartic forms $G_{i}(x , y) \in \mathbb{Z}[x , y]$, with $1 \leq i \leq 64$, such that every primitive solution $(x_{\mathit{l}}, y_{\mathit{l}})$  of the equation $F(x , y)= h \, p_{1} p_{2} p_{3}$ corresponds uniquely to a triple $(j, x_{l, j}, y_{l, j})$, with
$$
j \in \{1, 2, \ldots, 64\},\, \,  x_{\mathit{l}, j}, y_{\mathit{l}, j} \in \mathbb{Z}, \, \, \gcd(x_{\mathit{l}, j} , y_{\mathit{l}, j}) =1,
$$
and
$$
G_{j} (x_{\mathit{l}, j} , y_{\mathit{l}, j}) = h.
$$
Furthermore, 
\begin{equation*}
  \mathcal{H}\left( G_{j} \right)  = \left(p_1 p_2 p_3\right)^{6}  \mathcal{H}(F),
  \end{equation*}
  for $j = 1, \ldots, 64$.
\end{lemma}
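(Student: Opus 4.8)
The plan is to iterate Lemma \ref{lem1corres} three times, peeling off one prime at each stage; the count $64 = 4^3$ is exactly the number of forms produced by three applications, each contributing a factor of $4$. Writing $m = h p_1 p_2 p_3$, I would first apply Lemma \ref{lem1corres} with $p = p_1$ and $m_1 = h p_2 p_3$, noting that $p_1 \nmid m_1$ since the primes are distinct and $p_1 \nmid h$. This puts the primitive solutions of $F(x,y) = m$ into one-to-one correspondence with the union, over $i \in \{1,2,3,4\}$, of the primitive solutions of $\tilde{F}^{(1)}_i(x,y) = h p_2 p_3$, where each $\tilde{F}^{(1)}_i$ is one of the reduced forms defined in \eqref{deftildeinf}--\eqref{deftildeb}.

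The key point that makes iteration possible is that each reduced form still splits completely modulo the primes not yet used. By Lemma \ref{lem2}, applied once with the pair $(p_1, p_2)$ and once with the pair $(p_1, p_3)$, each $\tilde{F}^{(1)}_i$ splits completely modulo both $p_2$ and $p_3$. I would then apply Lemma \ref{lem1corres} a second time with $p = p_2$ and $m_1 = h p_3$ to each of these four forms, producing $4 \times 4 = 16$ forms satisfying the equation with right-hand side $h p_3$; by Lemma \ref{lem2} applied with the pair $(p_2, p_3)$, each of these still splits completely modulo $p_3$. A third and final application with $p = p_3$ and $m_1 = h$ then yields the $16 \times 4 = 64$ forms $G_j(x,y)$, each satisfying $G_j(x,y) = h$, indexed bijectively by triples $(i_1, i_2, i_3) \in \{1,2,3,4\}^3$.

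Because each individual application of Lemma \ref{lem1corres} gives a primitivity-preserving one-to-one correspondence, composing the three correspondences yields a single one-to-one correspondence between primitive solutions of $F(x,y) = h p_1 p_2 p_3$ and the union of the primitive solution sets of the $64$ equations $G_j(x,y) = h$; identifying $\{1,2,3,4\}^3$ with $\{1,\ldots,64\}$ supplies the index $j$ together with the integers $x_{l,j}, y_{l,j}$ satisfying $\gcd(x_{l,j},y_{l,j}) = 1$. For the height, the displayed relations following \eqref{deftildeb} show that passing from a form to any of its reductions $\tilde{F}_b$ multiplies $\mathcal{H}$ by $p^6$; applying this at the three primes gives $\mathcal{H}(G_j) = p_1^6 p_2^6 p_3^6 \, \mathcal{H}(F) = (p_1 p_2 p_3)^6 \, \mathcal{H}(F)$, as claimed.

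I expect the main obstacle to be the careful bookkeeping needed to confirm that the complete-splitting hypothesis genuinely propagates through all three stages. Lemma \ref{lem2} is stated for a single pair of primes, so at each stage one must invoke it separately for every prime not yet consumed, and one must check that the hypothesis $p \nmid m_1$ of Lemma \ref{lem1corres} continues to hold after each reduction; this holds because $p_1, p_2, p_3$ are distinct primes coprime to $h$, so at each stage the residual right-hand side is coprime to the prime being peeled off. Once this propagation is verified, the remainder is a formal composition of the three bijections together with the multiplicativity of the height factor $p^6$.
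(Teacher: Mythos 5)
Your proposal is correct and follows essentially the same route as the paper's proof: three successive applications of Lemma \ref{lem1corres}, with Lemma \ref{lem2} invoked to propagate complete splitting to the remaining primes, and the height relation obtained by applying the $p^6$ factor at each of the three stages. If anything, your explicit bookkeeping (applying Lemma \ref{lem2} to the pairs $(p_1,p_2)$, $(p_1,p_3)$, and then $(p_2,p_3)$) spells out a propagation step that the paper leaves implicit.
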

\begin{proof}
Let $m = p_1 p_2 p_3 h$. 
By Lemma \ref{lem1corres}, we may
reduce the Thue equation  $F(x , y) = m$ modulo $p_1$ to obtain $4$ quartic Thue equations
 \begin{equation}\label{reduceto4}
 \tilde{F}_{i}(x , y) = \frac{m}{p_1},
  \end{equation} 
 with $i = 1, 2, 3, 4$, such that   every primitive solution of $F(x , y)= h \, p_{1} p_{2} p_{3} = m$ corresponds uniquely to a primitive solution of exactly one of the equations in \eqref{reduceto4}.

 By Lemma \ref{lem2}, every
 binary quartic form $\tilde{F}_{i}(x , y)$ in \eqref{reduceto4} splits completely modulo $p_2$. 
 Applying Lemma \ref{lem1corres} modulo $p_2$  to each equation in \eqref{reduceto4}, we construct $4$   binary quartic  forms. Therefore,  we obtain $4^2$ Thue equations
  \begin{equation}\label{reduceto16}
 \tilde{F}_{i, k}(x , y) = \frac{m}{p_1p_2},
  \end{equation}   
  with $i, k =1, 2, 3, 4$,
   such that  every primitive solution $F(x , y)= h \, p_{1} p_{2} p_{3} = m$ corresponds uniquely to a primitive solution of exactly one of the equations in \eqref{reduceto16}.
   By \eqref{Hoftilde}, 
    \begin{equation}\label{HofFij}
  \mathcal{H}\left( F_{i, k} \right)  = \left(p_1 p_2 \right)^{6}  \mathcal{H}(F).
  \end{equation}

  By Lemma \ref{lem2},
 each form $\tilde{F}_{i, k}(x , y)$ splits modulo   $p_3$. We may apply Lemma \ref{lem1corres} once again to each equation in  \eqref{reduceto16}. This way we obtain $4^3$  equations 
  \begin{equation}\label{reduceto64}
G_{j}(x , y) = \frac{m}{ p_1 p_2 p_3} = h.
\end{equation}
The construction of these equations ensures a one-to-one correspondence between the primitive solutions of the equation $F(x , y) = m$ and the union of the  sets of the primitive   solutions of Thue equations in \eqref{reduceto64}.

 By \eqref{Hoftilde} and \eqref{HofFij},
 \begin{equation}\label{HofGj}
  \mathcal{H}\left( G_{j} \right)  = \left(p_1 p_2 p_3\right)^{6}  \mathcal{H}(F),
  \end{equation}
  for $j = 1, \ldots, 64$.
\end{proof}

We note that if $F(x , y)$ is  irreducible over $\mathbb{Q}$,  its associated forms $G_{j} (x , y)$, which are constructed in the proof of Lemma \ref{corresponds-sol}, will also  be irreducible over $\mathbb{Q}$ as all of the matrix actions are rational.  Furthermore, the forms $G_{j}(x , y)$ are not constructed as proper subforms of the  binary quartic form $F(x , y)$.
 Indeed, they are maximal over~$\Z_p$ for all $p\notin \{p_1,p_{2},p_3\}$ (being equivalent, up to a unit constant, to $F(x,y)$ over~$\Z_p$ in that case), while for $p\in\{p_1,p_2, p_3\}$, we have $p\nmid D_F$, implying $p^6 || D_{G_j}$, and so $G_j(x , y)$ cannot be a subform over $\Z_p$ of any form by equation~(\ref{St6}) (see the definition of a subform in \eqref{defofsubform}).

We remark that the  reduction of Thue equations $F(x , y) = m$ modulo prime divisors of $m$ is a classical approach, and some sophisticated applications of it to bound the number of solutions of Thue equations can be found in \cite{Bom, Ste}. 

\section{Avoiding Local Obstructions}\label{localsection}

In the previous section, we constructed $4^3$ binary quartic forms $G_{j}(x , y)$ and established Lemma \ref{corresponds-sol},  which corresponds each primitive solution of $F(x , y) = h p_1 p_2 p_3$  to a primitive  solution of one of the equations $G_{j}(x , y) = h$, for $1 \leq j \leq  4^3$. 
 Using Proposition \ref{maineq4}, we will obtain a small upper bound for the number of integral solutions to the equation $F(x , y) = m = p_{1} p_{2} p_{3} h$, which will lead us to  conclude that some of the newly constructed Thue equations $G_{j}(x , y)= h$ cannot have any solutions.

 In this section we will work with a proper subset of the set of all quartic forms  to construct forms such that the associated Thue equations have no local obstructions to solubility.
We will impose some extra congruence conditions in our choice of forms $F(x , y)$, resulting in construction of $4^3$ forms $G_i(x , y)$ that locally represent $h$. For each prime $p$, we will make some congruence assumptions modulo $p$ and present  $p$-adic densities for the subset of quartic forms that satisfy our assumptions to demonstrate that we will be left with a subset of $V_{\mathbb{Z}}$ with positive density.

Before we divide up our discussion modulo different primes, we note that by \eqref{defofsubform},
if a form is non-maximal, then either it is not primitive, or after an $\SL_2(\Z)$-transformation it is of the form $a_0x^4+a_1x^{3}y+a_2 x^2 y^2 +a_3 xy^3+a_4 y^4$, where $p^i\mid a_i$, $i=0,1,2, 3, 4$, for some prime~$p$.  In particular, integral  binary quartic forms that are non-maximal must factor modulo some prime $p$ as a constant times the forth power of a linear form.  It turns out that all integral  binary quartic forms that are discussed in this section are indeed maximal.

\subsection{Quartic Forms Modulo $2$.}

To ensure that a quartic Thue equation $F(x , y) =h$ has a solution  in $\mathbb{Z}_2$, it is sufficient to assume that
$$
F(x , y) \equiv L_1(x,y) L_2(x , y)^{3} \, \, (\textrm{mod}\, \,  2^{4}),
$$
where $L_1(x,y)$ and $L_2(x,y)$ are linearly independent linear forms modulo $2$. The system of two linear equations 
$$
 L_1(x,y) \equiv h \, \, (\textrm{mod}\,  2^{4})
 $$
 $$
  L_2(x , y) \equiv 1 \, \, (\textrm{mod}\,  2^{4}),
  $$
  has a solution and therefore, by Hensel's Lemma, $F(x , y) = h$ is soluble in $\mathbb{Z}_2$.

The 2-adic density of quartic forms $F(x , y)$ such  that $
F(x , y) \equiv L_1(x,y) L_2(x , y)^{3}$ modulo $2^{4}$ is
\begin{equation}\label{2-adicdensity}
\frac{6}{2^5} = \frac{3}{16},
\end{equation}
where the linear forms $L_1$ and $L_2$ can be chosen from the three linear forms $x$, $y$, or $x + y$.

It is indeed necessary to consider integral quartic forms modulo $16$, as a $2$-adic unit $u$ belongs to $\mathbb{Q}^{4}_{2}$ if and only if $u \equiv 1$ modulo $16 \mathbb{Z}_2$. 
More specifically,  assume that $(x_0: y_0: z_0)$ is a $\mathbb{Z}_2$-point on the projective curve $C: hz^4 = F(x , y)$ and
$u = z_0^4$, with $z_0$ a unit in $\mathbb{Z}_2$. Therefore, $z_0 = 1 +2 t$ for some $t \in \mathbb{Z}_2$ and
$$
z_{0}^{4} = (1 + 2 t)^4  \equiv 1 + 8 \left(t(3t+1)\right) \equiv 1 \, \, (\textrm{mod}\, \, 16).
$$

\subsection{Quartic Forms Modulo Large Primes}\label{largeprimesubsection}

Let us consider the curve $C: h z^4 = F(x , y)$ of genus~$g = 3$ over the finite field $\mathbb{F}_{q}$ of order $q$. By the Leep-Yeomans generalization of Hasse-Weil bound  in \cite{LeYe},  the number of points $N$ on the curve $C$ satisfies the inequality
\begin{equation}\label{HWLeYe}
\left| N - (q+1) \right| \leq 2g \sqrt{q}.
\end{equation}

 Let $p$ be a prime $p> (2g+1)^2 = 49$, $p \not \in \{p_1, p_2, p_3\}$, $p \nmid h$. Since $p+1 \geq 2g \sqrt{p} +1$, the lower bound in \eqref{HWLeYe} is nontrivial, implying that there must be an $\mathbb{F}_p$-rational point on the curve $h z^4 = F(x , y)$.

 If there exists   $a \in \Z$ such that
\begin{equation}\label{SimpleRoot}
F(x, y) \equiv (x- ay) A(x , y)\, \,  (\textrm{mod}\,  p),
\end{equation}
with $A(x , y)$ an integral cubic binary form for which
\begin{equation}\label{SimpleRoota}
A(a , 1) \not \equiv 0 \, \, (\textrm{mod}\, p),
\end{equation}
then 
by Hensel's lemma,
the smooth $\mathbb{F}_p$-point  $(x_0:  y_0 : z_0) = (a : 1 : 0)$  will lift to a 
$\mathbb{Z}_p$-point on the curve $h z^4 = F(x , y)$.
Similarly, if 
\begin{equation*}
F(x, y) \equiv y A(x , y)\, \,  (\textrm{mod}\,  p),
\end{equation*}
with $A(x , y)$ an integral cubic binary form for which
\begin{equation*}
A(1 , 0) \not \equiv 0 \, \, (\textrm{mod}\, p),
\end{equation*}
the smooth $\mathbb{F}_p$-point  $(x_0: y_0 : z_0) = (1 : 0 : 0)$  will lift to a 
$\mathbb{Z}_p$-point on the curve $h z^4 = F(x , y)$.

A quartic form  over $\F_p$  that has a triple root  must have a simple root, as well. 
So we will assume
that $F(x,y)$ does not factor as $cM(x,y)^2$ modulo~$p$ for any quadratic binary form $M(x,y)$ and constant $c$ over $\F_p$. By definition, these forms are  maximal over $\Z_p$. It follows from this assumption on $F(x ,y)$ that the curves $hz^4=F(x,y)$ are irreducible over $\F_p$  and there is at least one smooth $\mathbb{F}_p$-rational point on $h z^4 = F(x , y)$, which lifts to a $\mathbb{Z}_p$-point.

We conclude that the integral quartic  forms $G_{j}(x, y)$, constructed as described in Section \ref{splitsection} from such a form $F(x , y)$, all represent $h$ in $\Z_p$ for primes $p> (2g+1)^2$ as well. 

The $p$-adic density of binary quartic  forms  that are primitive and not constant multiples 
of  the second  powers of quadratic binary forms modulo~$p$ is
\begin{equation}\label{largedensity}
 1 - \frac{ (p-1)(p+1) p }{ 2p^5} -  \frac{ (p-1)(p+1)  }{p^5},
\end{equation}
where  the summand $-\frac{ (p-1)(p+1) p }{ 2p^5}$ eliminates forms  of the shape $ c M^2(x, y) = c (x-b_{1}y)^2 (x-b_{2}y)^2 $ or $ c M^2(x, y) = c (x-b_{1}y)^2 y^2$ (mod $p$), and the summand  $- \frac{ (p-1)(p+1)  }{p^5}$ eliminates  forms of the shape  $ c L(x , y)^4$ (mod $p$), with $L(x , y)$ a linear form modulo $p$.

\subsection{Quartic Forms Modulo Special Odd Primes}\label{specialoddprime}

For $p \mid h$ we will assume that 
$$
F(x , y) \equiv L_1(x,y) L_3(x , y)^{3}\, \, \textrm{ (mod}\, \,  p),
$$
where $L_1(x,y)$ and $L_2(x,y)$ are two linearly independent linear forms modulo $p$.
To find $\Z_p$-points on the curve   $C: hz^4 = F(x , y)$, we consider the equation $F(x , y)=0$ (mod $p$).
Since $L_1(x , y)$ and $L_2(x , y)$ are linearly independent modulo $p$, the system of linear equations
$$
 L_1(x,y) \equiv 0 \textrm{ (mod} \, \, p)
$$
 and 
 $$
 L_2(x,y) \equiv 0 \textrm{ (mod} \, \, p)
 $$
has exactly one solution.
Since $L_1(x , y)=0$ has at least three points over $\F_p$, the equation $F(x , y)=0$ (mod $p$) has at least two solutions over $\F_p$  that provide smooth $\F_p$-points on the curve  $C: hz^4 = F(x , y)$ (i.e., all points other than that intersection point of the two lines defined by $L_1(x , y)$ and $L_2(x , y)$).  By Hensel's Lemma, these smooth points will lift to $\Z_p$-points. Thus the equations $F(x,y)=h$ 
and $G_j(x,y)=h$ will  be  locally soluble modulo $p$.

Similarly, for every odd prime $p  \not \in \{ p_{1}, p_2, p_{3}\}$, with $ p <  49$  and $p \nmid h$ (these are the primes  not considered in Subsection \ref{largeprimesubsection}),  we will assume that 
$$
F(x , y) \equiv L_1(x,y) L_2(x , y)^{3}\, \, \textrm{ (mod}\, \,  p),
$$
where $L_1(x,y)$ and $L_2(x,y)$ are linear forms that are linearly independent modulo~$p$.
This condition implies that $F(x , y) \equiv h \textrm{ (mod} \, \, p)$ has solutions in integers, for $L_1(x,y)$ and $L_2(x,y)$ are linearly independent  and therefore we can find $x_{0} , y_{0} \in \mathbb{Z}$ satisfying the following system of linear equations:
 $$
 L_1(x_0,y_0) \equiv h \textrm{ (mod} \, \, p)
$$
 and 
 $$
 L_2(x_0,y_0) \equiv 1 \textrm{ (mod} \, \, p).
 $$
The smooth $\F_p$-point $(x_0 : y_0 : 1)$ lifts to a  $\Z_p$-point on the curve $C: hz^4 = F(x , y)$.

The $p$-adic density of primitive binary quartic forms of the shape
\begin{equation}\label{modhigherp}
 L_1(x,y) L_2(x , y)^{3}\, \, (\textrm{mod} \, \,  p)
\end{equation}
where $L_1(x,y)$ and $L_2(x,y)$ are linearly independent linear forms modulo $p$ and 
 is 
\begin{equation}\label{specialdensity}
\frac{(p+1)p(p-1)}{p^{5}}.
\end{equation}
The above density is calculated by considering the unique  factorization of the form $F$ modulo $p$ as
$$
m_{0} (x - b_{1}y)(x - b_{2} y)^3,
$$
with $m_{0}$ non-zero, and $b_{1}$ and $b_{2}$ distinct roots (possibly $\infty$) modulo $p$.
Such forms are maximal over $\Z_p$

\section{Completing the proof}\label{completesection}

For $i=1, 2, 3$, let $p_{i}$ be the $i$-th prime greater than   $4$ such that $p_{i}\nmid h$ and set
 $$
 m = h\, p_1 p_2 p_3,
$$ 
and 
$$
\mathcal{P} = \{ p_1,  p_2,  p_3\}.
$$
 For example, if $h=1$, we will choose $p_1 = 5$, $p_2 = 7$, and $p_3 = 11$.
  Let $F(x , y)$ be a  maximal primitive irreducible integral  binary quartic form  which has a trivial stabilizer  in $\GL_{2}(\mathbb{Q})$, with   
 $$
\left|D_F   \right| > (3.5)^{24} \,  4^{8} \left( \prod_{i=1}^{3} p_{i} \right)^{12}.
 $$
 We note that the above assumption on the size of the discriminant of quartic forms  exclude only finitely many $\GL_{2}(\mathbb{Z})$-equivalence classes of quartic forms (see \cite{BM, EG1}).

In order to ensure that $h$ is represented by $F$ in $\mathbb{R}$, we assume that the leading coefficient of $F$ is positive if $h$ is positive and negative otherwise.  Assume further that $F(x , y)$  splits completely modulo the primes $p_{1}$, $p_2$, $p_{3}$.

Assume that for every prime $p \not \in \{ p_{1}, p_2, p_3\} = \mathcal{P}$, with $ p < 49$, we have
$$
F(x , y) \equiv L_1(x,y) L_2(x , y)^{3}\, \, \textrm{ (mod}\, \,  p),
$$
where $L_1(x,y)$ and $L_2(x,y)$ are linear forms that are linearly independent modulo~$p$.

Finally, assume, for each prime $p >  49$,
that $F(x,y)$ does not factor as $cM(x,y)^2$ modulo~$p$ for  any quadratic binary form $M(x,y)$ and constant $c$ over $\F_p$.

By Proposition  \ref{maineq4}, and taking $\epsilon = \frac{1}{12}$,  there are at most 
 \[ 
 36 -16\mathtt{i} + \frac{4-\mathtt{i}}{\frac{1}{4}} = 52 - 20 \mathtt{i}
   \]
  primitive  solutions to the equation 
   $$
   F(x , y) = m =  h\, p_1 p_2 p_3,
   $$
   where $2 \, \mathtt{i}$ is the number of non-real roots of the polynomial $F(X, 1)$. 

By Lemma \ref{corresponds-sol}, each primitive solution $(x_{0}, y_{0})$ of $F(x , y) = m$ corresponds uniquely to a solution of $G_{i}(x , y) = h$, where $1 \leq i \leq 4^3$ is also uniquely determined by 
$(x_{0}, y_{0})$. 
Since 
$$
4^3 - 52 + 20 \mathtt{i} = 12 +  20 \mathtt{i} \geq 12
$$
we conclude that at least  $12$ of the $64$ equations $G_{i}(x , y) = h$ have no solutions in integers $x, y$.

By \eqref{HofGj}, and Theorems 
\ref{BaSh-thm1.7} and \ref{BaSh-thm2.11},
 we have the following lower bound $\mu$ for the density of integral quartic forms that represent $h$ locally, but not globally,
\begin{equation}\label{finaldensity}
\mu = \frac{12 }  {\left(p_1 p_2 p_3\right)^{5}} \, \,  \delta_{2} \prod_{p \in \mathcal{P}} \sigma(p)  \prod_{p\geq 49, \,  p\not\in \mathcal{P}, \, p\nmid h} \lambda(p) \prod_{p \mid h \, \textrm{or}\, p < 49} \gamma_{p},  
\end{equation}
where, via \eqref{splitdensity},
\eqref{2-adicdensity},
\eqref{largedensity},
\eqref{specialdensity},
$$\delta_2 =  \frac{3}{16},$$
$$\sigma(p)= \frac{ (p -1)^2 (p+4) (p-2) (p-3)     }{4! \, p^5},$$ 
\begin{equation}\label{lambdacal}
\lambda(p) = 1 - \frac{ (p-1)(p+1) p }{ 2p^5} -  \frac{ (p-1)(p+1)  }{p^5}, 
\end{equation}
 and 
 $$\gamma(p) = \frac{(p+1)p(p-1)}{p^{5}}.$$
  In \eqref{finaldensity} all products are over rational primes. 
 For all but finitely many primes $p$, the density 
 $
 \lambda(p)$
 in \eqref{lambdacal}
  contributes to the  product in \eqref{finaldensity}.  Since 
  $$
  \prod_p \left(1 - \frac{ (p-1)(p+1)^2}{ 2p^5}\right)
  $$
  is a convergent Euler product, the lower bound $\mu$ is a real number satisfying $0 <\mu <1$.


\section*{Acknowledgements.}  I am grateful to the anonymous referee for their careful reading of an earlier version of this manuscript  and insightful comments. I would like to thank Arul Shankar for very helpful conversation and answering my questions, especially regarding the height $\mathcal{H}(I , J)$, which is the key tool in the present paper.
 I would also like to thank Mike Bennett and  Manjul Bhargava  for their insights and suggestions.
  This project was initiated during my visit to the
Max Planck Institute for Mathematics, in Bonn, in the academic year 2018-2019. I acknowledge the support from the MPIM. In different stages of this project,  my research has been partly 
 supported by the National Science Foundation  award DMS-2001281  and by the 
Simons Foundation Collaboration Grants, Award Number 635880.



\end{document}